\renewcommand\nomgroup[1]{%
  \item[\bfseries
  \ifstrequal{#1}{A}{Sets}{%
  \ifstrequal{#1}{B}{Decision variables}{%
  \ifstrequal{#1}{C}{Parameters}{}}}%
]}
\newcommand{\setr}[1]{\{#1\}}
\newcommand{\abs}[1]{\ensuremath{\left\lvert{#1}\right\rvert}}
\newcommand{\st}{\operatorname{subject \text{$\, \,$} to}}
\renewcommand{\st}{\operatorname{s.} \operatorname{t.}}
\newcommand{\wrt}{\operatorname{w.} \operatorname{r.} \operatorname{t.}}
\newcommand{\Pt}{\widetilde{P}}
\newcommand{\GG}{\mathcal{G}}
\newcommand{\WW}{\mathcal{W}}
\newcommand{\DD}{\mathcal{D}}
\newcommand{\LL}{\mathcal{L}}
\newcommand{\CC}{\mathcal{C}}
\newcommand{\MM}{\mathcal{M}}
\renewcommand{\SS}{\mathcal{S}}
\newcommand{\Qb}{\mathbb{Q}}
\newcommand{\Qbhat}{\widehat{\mathbb{Q}}}
\newcommand{\norm}[1]{\|#1\|}
\newcommand{\dist}{\mathrm{dist}}
\newcommand{\bt}{\tilde{b}}
\newcommand{\bo}{\overline{b}}
\newcommand{\bu}{\underline{b}}
\newcommand{\ndr}{n^{\mathrm{DR}}}
\newcommand{\setdef}[2]{\{#1 \; | \; #2\}}
\newcommand{\setdefb}[2]{\Big\{#1 \; | \; #2 \Big\}}
\newcommand{\real}{\mathbb{R}}
\newcommand{\realnonnegative}{\mathbb{R}_{\ge 0}}
\newcommand{\Pb}{\mathbb{P}}
\newcommand{\Eb}{\mathbb{E}}
\newcommand{\mis}{\mathrm{mis}}
\newcommand{\out}{\mathrm{con}}
\newcommand{\costcoeff}{\mathsf{H}}
\newcommand{\Xirob}{\Xi_{\mathrm{rob}}}
\newcommand{\Xihat}{\widehat{\Xi}}
\newcommand{\until}[1]{[#1]}
\newcommand{\uncertain}{\xi}
\newcommand{\uncertainhat}{\widehat{\uncertain}}
\definecolor{new}{rgb}{0.55,0,0.55}
\newtheorem{theorem}{Theorem}[section]
\newtheorem{remark}[theorem]{Remark}
\newtheorem{proposition}[theorem]{Proposition}  
\newcommand{\oprocendsymbol}{\hbox{$\bullet$}}
\newcommand{\oprocend}{\relax\ifmmode\else\unskip\hfill\fi\oprocendsymbol}
\newcommand{\longthmtitle}[1]{\mbox{}\textup{\textsl{(#1):}}}
\begin{document}
\title{A two-step approach to Wasserstein distributionally robust chance- and security-constrained dispatch}

\author{Amin Maghami, Evrim Ursavas, Ashish Cherukuri
\thanks{A. Maghami and A. Cherukuri are with the Engineering and Technology institute Groningen, Unversity of Groningen. E. Ursavas is with the Faculty of Economics and Business, University of Groningen. (e-mails: \texttt{ \{a.maghami, a.k.cherukuri, e.ursavas\}@rug.nl}). Amin Maghami is  supported by a scholarship from the Faculty of Science and Engineering, University of Groningen. }
}

\maketitle
 
\begin{abstract}
    This paper considers a security constrained dispatch problem involving generation and line contingencies in the presence of the renewable generation. The uncertainty due to renewables is modeled using joint chance-constraint and the mismatch caused by contingencies and renewables are handled using reserves. We consider a distributionally robust approach to solve the chance-constrained program. We assume that samples of the uncertainty are available. Using them, we construct a set of distributions, termed ambiguity set, containing all distributions that are close to the empirical distribution under the Wasserstein metric. The chance constraint is imposed for all distributions in the ambiguity set to form the distributionally robust optimization problem. This problem is nonconvex and computationally heavy to solve exactly. We adopt a two-step approach to find an approximate solution.  In the first step, we construct a polyhedral set in the space of uncertainty that contains enough mass under all distributions in the ambiguity set. This set is constructed by solving several two-dimensional distributionally robust problems.  In the second step, we solve a linear robust optimization problem where the uncertain constraint is imposed for all uncertainty values lying in the polyhedral set. We demonstrate the scalability and robustness of our method using numerical experiments.
\end{abstract}
%

\section{Introduction}
Power systems are going through important changes, driven mainly by the increasing penetration of renewable sources. While using renewable energy is vital for mitigating climate change, they also bring significant challenges to the power grid. Renewable sources such as wind and solar are weather dependent resulting in uncertainty in production and difficulty in control. Furthermore, the intermittent nature of the production makes the grid more vulnerable to component failures rising the need for increased attention to secure the grid under such contingencies. Consequently, grid operators must clearly understand and accurately model the uncertainty and variability of renewable generation in order to achieve stable and optimal operation of the future power systems.
In addition, a secure power system must be able to withstand all possible contingencies without violating physical limitations which may lead to cascading failures \cite{PK-JP-VA-GA-AB-CC-NH-DH-AS-CT-TV-VV:04}. To avoid this, corrective N-1 security criterion is widely used within the dispatch or Optimal Power Flow (OPF) models, according to which the system is capable of finding a new operational solution after the occurrence of a single contingency (for example a line or a generator contingency), while no operational security limits (such as line flow limits) are violated. Despite the importance of N-1 criterion, the security constrained OPF problem has not been widely studied in the presence of renewable sources, except for few works \cite{MV-KM-JL-GA:13,KS-HN-LR-SM-RB-DB:19,HS-NA-HZ:17,MV-SC-EI-MI-TK-OM-JM-LA-RW-GA:13,LR-FO-BV-GA:15,LR-FO-TK-GA:13}. These works consider renewables to be uncertain, model the dispatch as stochastic optimization problems, and look for solutions where either information about the distribution is (partially) known or is inferred through samples. However, they do not consider the inherent robustness issues that stem from ``uninformativity'' of data about the uncertainty. That is, when the data is either scarce, corrupted, or sampled in a non-identical manner. This challenge is overcome by employing techniques of distributionally robust (DR) optimization, where decisions are made taking into account worst-case perturbations to the collected data. Motivated by this fact, in this paper, we propose a Wasserstein metric based DR approach for solving a dispatch problem that takes into account N-1 security constraints and the influence of uncertain renewable  generation through chance-constraints. We then provide a two-step scalable method for approximating the solution of this DR problem.

\subsection{Related works}

A comprehensive survey~\cite{MV-SC-EI-MI-TK-OM-JM-LA-RW-GA:13} on the stochastic security constrained dispatch problem has showed that considering forecast error is important and employing corrective control actions can reduce the excessive cost and increase the integration of renewable sources. Often, such security-constrained stochastic dispatch problems involve uncertainty in constraints which can be modeled in several different ways, from chance constraints~\cite{KS-HN-LR-SM-RB-DB:19,YD-TM-MM:22} to a robust formulation~\cite{YY-PL-EL-TZ-JZ-FZ-DS:17, LY-CZ-JJ:18}. We focus on the most popular choice among these, that of chance-constrained problems, where the uncertain constraint is required to hold with high probability. One way of solving these problems in a data-driven way is the scenario approach~\cite{GC-MC:07} where the underlying uncertain constraint defining the chance-constraint is required to hold for all samples of the uncertainty. Such a method was studied in the context of security constrained dispatch in~\cite{FC-SF-PP-lW:12,FB-FG:08,MV-KM-JL-GA:13}.  

The scenario approach gets computationally burdensome when the dataset is large. To let go of this roadblock, the work~\cite{KM-PG-JL-14} explores a two-step procedure for solving chance-constraints, where sample-based constraint is replaced with a robust optimization. This is similar to what we explore in the distributionally robust setting in the current work. An alternative way of handling chance constraints is to use analytical reformulations when the distribution is Gaussian, e.g., in the case of DC OPF~\cite{LR-FO-TK-GA:13,KS-HN-LR-SM-RB-DB:19} and AC OPF~\cite{LR-GA:18,AV-LH-AB-LR-SC:20}. This is extended to mixture of Gaussians using linearization-based technique in~\cite{GC-HZ-YS:22}. However, no theoretical claims were made. 

All the above mentioned methods assume that either the probability distribution is known or samples drawn from it in an independent manner are available. The former and the latter assumptions are very strong, as pointed out in~\cite{ZW-QB-HX-DG:16} and~\cite{BP-AH-SB-DC-AC:20}, respectively. This fact motivates us to look at distributionally robust (DR) optimization, where decisions  hedge against the lack of complete information about the uncertainty by considering a set of distributions. The DR problems take different forms, depending on the type of the set of distributions, termed ambiguity set, considered in the optimization problem. In~\cite{LR-FO-BV-GA:15,YZ-SS-JM:17} moment-based ambiguity sets, that contain all distributions with a given first and second moment, are used. For such a setup, one still needs some data about the uncertainty to estimate the first and second moment that defines the ambiguity set. On the other hand, ambiguity sets constructed directly using data and a suitable distance metric in the space of distributions circumvent this modeling limitation. For these sets, the radius of the ambiguity set can be adjusted to control the level of conservativeness. In~\cite{YC-QG-HS-ZL-WW-ZL:18}, Kullback-Leibler (KL) divergence distance is used for DR unit commitment problem. For uncertainties with finite support, the $\ell_1-$ and $\ell_\infty-$norm based distance between distributions is employed in~\cite{CZ-YG:16,ZS-HL-VD:19,TD-QY-XL-CH-YY-MW-FB:19} for unit commitment problem and in~\cite{LX-CW-XS-ZB-LX:19} for security-constrained dispatch. Both KL and norm based metrics require the uncertainty to have finite support or only look for distributions supported on obtained historical data. The Wasserstein metric on the other hand can generalize to distributions supported on points that are not present in the historical data, thereby leading to better robustness performance. Recent works~\cite{XZ-HC:20,AZ-MY-MW-YZ:20,RZ-HW-XB:19,BP-AH-SB-DC-AC:20,AA-CO-JK-ZD-JT-FV:22} demonstrate the effectiveness of this metric in various dispatch related problems.Nevertheless, with Wasserstein metric, as the number of available samples increases or the network becomes large, solving the DR chance-constrained problem exactly becomes computationally difficult. This is because the DR problem is equivalent to a bilinear~\cite{AA-CO-JK-ZD-JT-FV:22} or a mixed-integer program~\cite{ZC-DK-WW-18,WX:21} with the number of constraints that scales with the product of the number of samples and size of the network. 
The main idea of our work is to explore methods that solve optimization problems having decision variables and constraints either independent of the size of the network or the number of samples. To this end, we build on the two-step approaches proposed in~\cite{CD-WF-LJ-LY-JL:18} and~\cite{BP-AH-SB-DC-AC:20}. The idea is to first construct a set in the space of uncertainty such that the probability mass contained in this set for all distributions in the ambiguity set is high. Then, in the second step, a robust optimization problem where the uncertain constraint is enforced for all uncertain values in the constructed set is solved. Our work differs from~\cite{CD-WF-LJ-LY-JL:18} and~\cite{BP-AH-SB-DC-AC:20} as we consider joint chance-constraints as opposed to individual ones in~\cite{CD-WF-LJ-LY-JL:18} and we take into account correlation among renewables when performing the first step, which is different from~\cite{BP-AH-SB-DC-AC:20}. In the latter comparison, our method is less conservative as we are able to construct more ``informative'' sets in the first step  while still retaining the tractability of the second step.

\subsection{Summary of contributions} 
\emph{Our first contribution} is modeling the dispatch problem considering renewables and contingencies. We consider a security constrained dispatch problem with two stages of decision making, where the stages are separated by an event consisting of the realization of power provision by the renewable sources and contingency or failure in the network. For the first stage,  the operator needs to determine for each conventional generator, the generation set point and the procurement of the reserve capacities.  The latter include two different sets of reserves, ones that are activated to handle imbalances due to uncertainty and the others to counter the contingency. The second stage decisions are control policies that determine how the power imbalance is adjusted network-wide using reserves. Concurrently, the aim is to ensure that the line flow and reserve capacity limits are satisfied collectively with high probability. These considerations give rise to a chance-constrained optimization problem. We assume that the information about the uncertainty is known through a certain number of samples. We consider the ambiguity set to be all distributions with a compact support that are close to the empirical distribution in the Wasserstein metric and formulate the DR chance-constrained problem where the probabilistic constraint is required to hold for all distributions in the ambiguity set. \emph{Our second contribution} is the two-step procedure to approximate the solution of the DR problem. In the first step, we assume that the renewables are partitioned into sets where two renewables are correlated only if they belong to a partition. This encapsulates the scenario where renewables in a certain geographical location will have similar pattern of energy generation. For each partition, we construct a polyhedral set that contains enough probability mass under all distributions in the ambiguity set. We construct these sets by solving several two-dimensional DR chance-constrained problems. In the second step, we do robust optimization with respect to the Cartesian product of all these polyhedral sets. The optimal solution of the robust problem is guaranteed to be feasible for the DR chance-constrained problem. Finally, \emph{our third contribution} involves demonstration of our two-step method with a numerical example of IEEE~RTS~24-bus system. We show how our method is scalable in terms of computational ease as the number of samples grow, while providing a tunable parameter to trade-off cost and violation frequency of the obtained decision. 

The rest of the paper is organized as follows. Section~\ref{sec:problem} models the optimization problem capturing the dispatch decision. Section~\ref{sec:method} considers the DR version of the problem, presents the two-step approach, derives computational tractability of the method, and analyzing the guarantee of the obtained solution. Finally, Section~\ref{sec:sims} presents the numerical example. 

\section{Problem statement}\label{sec:problem}

In this section, we define the N-1 security constrained dispatch problem in the presence of uncertainty caused due to renewable sources. The aim is to optimize the cost of  energy generation and reserve procurement while satisfying constraints that encode various capacity limits, reserve activation for handling discrepancies caused by contingencies and renewables, and probabilistic flow constraints due to line capacity limits.
Let \(\GG\), \(\WW\), \(\DD\), and \(\LL\) be the set of conventional generators, renewable sources, loads, and power lines, respectively. Each load $d \in \DD$ is associated with a demand $P_d \ge 0$ that is assumed to be known and fixed. Let $\CC := \setr{0,1,2,\dots,N_{\CC}}$ be the set enumerating the possible line and generator contingencies. Each element $c \in \CC$ with $c \ge 1$ represents a scenario where one component, either a line or a conventional generator has failed. The element $0 \in \CC$ corresponds to the case where the system is intact. We assume that the network remains connected under any line contingency. For notational ease, we divide the set of contingencies $\CC$ into ones where a generator fails $\CC_g \subset \CC$ and ones where a line fails $\CC_\ell \subset \CC$. Note that $\CC = \CC_g \cup \CC_\ell$ and $\CC_g \cap \CC_\ell = \emptyset$. For $c \in \CC_g$, the generator under contingency is denoted by $g_c \in \GG$. We consider two sets of decision variables, ones that are made before and after encountering contingency and uncertainty, respectively. The first set includes the planned power generation for the intact system, denoted $p_g$, $g \in \GG$, and two types of reserves that handle generation mismatch due to uncertainty and generator contingency, respectively. For the renewable uncertainty, we denote $r^+_g, r^-_g \ge 0$,  $g \in \GG$ as the up and down reserve capacities. The values $r^+_g$ and $r^-_g$ denote the upper and lower bound on the change in generation from the planned value $p_g$ that generator $g$ can execute using reserves. Moreover, $r_g^{\out} \ge 0$, $g \in \GG$ is the reserve capacity of $g$ to handle the change in power due to generator contingencies. We decide on the planned generation $p_g$ in such a way that it allows a feasible power flow under any line contingency. Thus, these events do not require reserve capacity.
We assume that we have access to the forecast power production by each renewable generator $P_w$ , $w \in \WW$ at the first stage, that is, before the actual uncertainty and contingency is revealed. The actual generation $\Pt_w$ usually deviates from the forecast value. To this end, we assume that the actual generation $\Pt = (\Pt_w)_{w \in \WW}$ is a random variable with distribution $\Pb$ and a compact support $\Xi \subset \realnonnegative^{n_w}$, where $n_w := \abs{\WW}$\footnote{The number of elements in a set $\SS$ is denoted by $\abs{\SS}$.}. We assume that the actual renewable generation as well as the contingency is realized before making the second set of decisions which involves deploying reserves to ensure power balance in a probabilistic sense. The total deviation of the actual and the forecast renewable generation in the network is represented by $\Pt_{\mis} = \sum_{w \in \WW}({P_w} - \Pt_w)$.
For the second-stage, we consider two sets of variables. To balance the mismatch due to renewable uncertainty, we let $\alpha^c_g$, $c \in \CC$, $g \in \GG$ be the affine control policies that determine the level of reserve activation. Given the total power mismatch $\Pt_\mis$, the change in power generation of $g$ using reserves under contingency $c \in \CC$ is $\alpha^c_g \Pt_\mis$. Thus, these policies define how $\Pt_\mis$ is handled collectively by generators active under contingency $c$. The second set of variables $\delta^c_g$, $c \in \CC$, $g \in \GG$ tackle the mismatch due to contingency.  Specifically, if $c \in \CC_\ell$, then there is no net power loss, so we set $\delta^c_g = 0$ for all $g \in \GG$. If $c \in \CC_g$, then the planned power $p_{g_c}$ is lost as generator $g_c$ fails, and for this case, the change in power generation is denoted by $\delta^c_g$. Note that for $g_c$, we set $\delta^c_{g_c} = - p_{g_c}$. Thus, the net power generated under contingency $c \in \CC$ by an active generator $g$ is $p_g + \Pt_\mis \alpha^c_g + \delta^c_g$. This is a random quantity as the mismatch is uncertain. In our formulation, the power balance holds for this eventually planned generation level  if it holds for $p_g$ under forecast renewable generation. However, the line constraints might only hold in a probabilistic manner. We next define our main optimization problem and term it as the chance-constrained N-1 security constrained dispatch (CCSCD)
\begin{subequations}\label{eq:SCOPF1}
\begin{align}
\min & \, \,  \sum_{g \in \GG} \Bigl( {{\costcoeff_g}{p_g} + {\costcoeff_g^+}{r^{+}_g}+{\costcoeff_g^-}{r^{-}_g}+{\costcoeff_g^{\out}}{r^{\out}_g}} \Bigr) \label{eq:SCOPF1-obj}
\\
 \wrt & \, \,  \{p_g,r_g^+,r_g^-,r_g^{\out}\}_{g \in \GG}, \{\alpha_g^c,\delta_g^c\}_{g \in \GG, c \in \CC}
\\
\st &  \, \,  \sum_{g \in \GG}{p_g} + \sum_{w \in \WW}{P_w} = \sum_{d \in \DD}{P_d},  \label{eq:SCOPF1-gen-eq}
\\
& \, \,  p_g + r_g^{+} +r_g^{\out} \leq P_g^{\max},  \quad  \forall g \in \GG,\label{eq:SCOPF1-gen-bound-up}
\\
& \, \,   p_g - r_g^{-} \geq P_g^{\min},  \quad \forall g \in \GG,\label{eq:SCOPF1-gen-bound-down}
\\
& \, \,  0 \leq r_g^{-}, r_g^{+},r_g^{\out} \leq R_g^{\max},  \quad \forall g \in \GG, \label{eq:SCOPF1-res-bound}
\\
& \, \,  \sum_{g \in \GG} \alpha^c_g=1, \quad \forall c \in \CC, \label{eq:SCOPF1-alpha-1}
\\
& \, \,  \alpha^c_g \ge 0,  \quad  \forall g \in \GG, \forall c \in \CC, \label{eq:SCOPF1-alpha-nn}
\\
& \, \,  \alpha^c_{g_c} = 0, \quad \forall c \in \CC_g, \label{eq:SCOPF1-alpha-zero}
\\
& \, \,   \delta^c_{g_c} = -p_{g_c}, \quad \forall c \in \CC_g,  \label{eq:SCOPF1-delta-Pg}
\\
& \, \, \sum_{g \in \GG} {\delta^c_g} = 0, \quad \forall c \in \CC, \label{eq:SCOPF1-delta-sum}
\\
& \, \,  \delta^c_g \leq r_g^{\out},   \quad \forall g \in \GG, \forall c \in \CC_g, \label{eq:SCOPF1-delta-bound}
\\
& \, \,  \delta^c_g \geq 0, \quad \forall g \in \GG, g \not= g_c, \forall c \in \CC, \label{eq:SCOPF1-delta-possitive}
\\
& \, \, \delta^c_g = 0, \quad \forall g \in \GG, \forall c \in \CC_\ell, \label{eq:SCOPF1-delta-zero}
\\
& \, \,  \Pb \left\{
    \begin{array}{ll}
        {\alpha_g^c}{\Pt_{\mis}} \leq r_g^{+}, \forall g \in \GG, \\
        -{\alpha_g^c}{\Pt_{\mis}} \leq r_g^{-}, \forall g \in \GG,\\
         \big| \sum_{g \in \GG} M^c_{gl}(p_g + \alpha_g^c  \Pt_{\mis} +\delta^c_g) 
         \\
         \, \, +\sum_{w \in \WW}{M^c_{wl}{\Pt_w}}  + \sum_{d \in \DD}{M^c_{dl}{P_d}} \big|
          \\
          \qquad \leq P_l^{\max}, \forall l \in \LL, \forall c \in \CC
    \end{array}
\right\} \geq 1-\epsilon.
 \label{eq:SCOPF1-cc}
\end{align}
\end{subequations}
The objective function~\eqref{eq:SCOPF1-obj} represents the total operational cost of the system, where the first term stands for cost of planned generation, the second and the third take into account the cost of procuring reserves that handle uncertainty, and the last term quantifies the cost of reserves dedicated for generator contingencies. Constraint~\eqref{eq:SCOPF1-gen-eq} ensures that the planned power generation in the first stage satisfies the load along with the forecast renewable generation levels. Constraints~\eqref{eq:SCOPF1-gen-bound-up} and~\eqref{eq:SCOPF1-gen-bound-down} impose power generation limits under maximum up and down regulation provided by reserves. Bounds on the acquired reserve capacities are given in~\eqref{eq:SCOPF1-res-bound}. Using~\eqref{eq:SCOPF1-alpha-1},~\eqref{eq:SCOPF1-alpha-nn}, and~\eqref{eq:SCOPF1-alpha-zero}, we restrict the control policies that handle uncertainties to a simplex while making sure that the generator under contingency for any $c \in \CC_g$ does not participate in power provision. Constraints~\eqref{eq:SCOPF1-delta-Pg} to~\eqref{eq:SCOPF1-delta-zero} balance the loss in generation due to generator contingencies. Here,~\eqref{eq:SCOPF1-delta-Pg} enforces the post-contingency power to be zero for the failed generator,~\eqref{eq:SCOPF1-delta-sum} guarantees that the power loss is accommodated by other active generators,~\eqref{eq:SCOPF1-delta-bound} ensures that the reserve activation is bounded by the reserve capacity $r_g^{\out}$,~\eqref{eq:SCOPF1-delta-possitive} imposes the change in generation due to contingency not to be negative, and~\eqref{eq:SCOPF1-delta-zero} makes reserve activation to be zero when there are only line contingencies. Finally,~\eqref{eq:SCOPF1-cc} is a chance constraint containing three sets of inequalities which must be satisfied jointly with high probability $1-\epsilon$. The first two sets of constraints within the chance constraint are for upper and lower reserves assigned to renewable uncertainty. The third constraint implies that in the presence of renewable uncertainty the line flow must satisfy line limits, where $M^c_{gl}, M^c_{wl}, M^c_{dl}$ are Power Transfer Distribution Factors (PTDFs) which respectively translate conventional power injections, 
renewable energy injections, and load demand into line flows. These matrices can be computed, for example, by implementing the method outlined in~\cite{MV-KM-JL-GA:13}. In the chance constraint the parameter $P_l^{\max}$ denotes the allowed power flow capacity of line $l \in \LL$. 

Our aim in this paper is to solve the above chance-constrained optimization problem using data. To this end, we replace the chance-constraint in problem~\eqref{eq:SCOPF1} with its distributionally robust (DR) counterpart and derive a computationally efficient method for approximating the solution of the resulting optimization problem.

\section{Two-step distributionally robust method for solving CCSCD}\label{sec:method}
For the sake of convenience, we rewrite the CCSCD problem~\eqref{eq:SCOPF1} in the following compact form:
\begin{subequations}\label{eq:simple}
\begin{align}
\min_x & \quad c^{\top}x \label{eq:simple-obj}
\\
\st &  \quad \Lambda x  \leq \beta,  \label{eq:simple-norandom}
\\
&   \quad \Pb (e_k^{\top}x+f_k^{\top} \uncertain + x^{\top}F_k \uncertain  \leq h_k, \forall k \in \until{K}) \! \ge \! 1 \! - \! \epsilon, \label{eq:simple-random}
\end{align}
\end{subequations}
where $x \in \real^{n_x}$ stands for the decision variables and $\uncertain \in \real^{n_\uncertain}$ denotes the uncertain generation in the network given by the vector $(\Pt_w)_{w \in \WW}$. We use the notation $[K] := \{1,2,\dots,K\}$. Recall that $\uncertain$ has the distribution $\mathbb{P}$ and support $\Xi$. The constraint~\eqref{eq:simple-norandom} captures the deterministic constraints~\eqref{eq:SCOPF1-gen-eq} to~\eqref{eq:SCOPF1-delta-zero} in the formulation~\eqref{eq:SCOPF1}. It is worth mentioning that deterministic equality constraints in~\eqref{eq:SCOPF1} are treated using two inequality constraints in~\eqref{eq:simple-norandom}. Note that here, $K = 2\abs{\CC}(\abs{\GG} + \abs{\LL})$ is the number of constraints included in the joint chance-constraint \eqref{eq:SCOPF1-cc} and vectors $e_k$, $f_k$, $h_k$ and matrix $F_k$ represent in a general form the $k$-th constraint.

Motivated by the real-life situation, we assume that the distribution $\Pb$ is not entirely known and instead only a finite number of samples of $\uncertain$ are available. Our approach then is to approximate the solution of~\eqref{eq:simple} with a DR optimization problem using the available data.
To this end, let $\Xihat^N := \{\uncertainhat_1, \uncertainhat_2, \dots, \uncertainhat_N\}$ be the set of $N$ samples of the uncertainty and let $\Qbhat_N := \frac{1}{N} \sum_{m=1}^N \delta_{\uncertainhat_m}$ be the empirical distribution, where $\delta_{\uncertainhat_m}$ is the delta function at the point $\uncertainhat_m$. Given a radius $\theta \ge 0$ and the dataset, the Wasserstein ambiguity set is given by 
\begin{align}\label{eqn:ambiguityset}
\MM_N^{\theta} := \{{\Qb} \in \mathcal{P}(\Xi) \mid d_w(\Qb,\Qbhat_N) \leq \theta \},
\end{align}
where $\mathcal{P}(\Xi)$ is space of all distributions  supported on $\Xi$ and $d_w$ stands for the Wasserstein metric. The ambiguity set contains all distributions that are at most $\theta$ away from the empirical distribution in the Wasserstein metric~\cite{PME-DK:18} $d_w: \mathcal{P}(\Xi)\times\mathcal{P}(\Xi) \to \real$ defined as: 
\begin{align}\label{eqn:W-metric}
    d_w(\Qb,\Qbhat_N) := \min_{\gamma \in \mathcal{H}(\Qb,\Qbhat_N)} \int_{\Xi\times\Xi} \norm{\uncertain -  \uncertainhat} \gamma(d\uncertain,d\uncertainhat),
\end{align}
where $\norm{\cdot}$ represents a norm in the Euclidean space and $\mathcal{H}(\Qb,\Qbhat_N)$ is the set of distributions on the space $\Xi \times \Xi$ with marginals $\Qb$ and $\Qbhat_N$. 
The DR version of~\eqref{eq:simple} based on the ambiguity set $\MM_N^{\theta}$ is given as:
\begin{subequations}\label{eq:DRCCP}
\begin{align}
\min_x & \quad c^{\top}x \label{eq:DRCCP-obj}
\\
\st &  \quad \Lambda x  \leq \beta,  \label{eq:DRCCP-norandom}
\\
&   \quad \inf_{\Qb \in \MM_N^{\theta}}\Qb (e_k^{\top}x+f_k^{\top}\uncertain +x^{\top}F_k\uncertain  \leq h_k, \forall k \in \until{K}) \notag
\\
&  \qquad \qquad \qquad \qquad \qquad \qquad \qquad \qquad\ge 1-\epsilon. \label{eq:DRCCP-random}
\end{align}
\end{subequations}
The above problem requires that each joint chance constraint holds for all distributions in the ambiguity set. The constraint~\eqref{eq:DRCCP-random} contains an infinite-dimensional optimization problem. However, it can be reformulated exactly into a mixed-integer linear program~\cite{ZC-DK-WW-18,WX:21} or a bilinear optimization problem~\cite{AA-CO-JK-ZD-JT-FV:22} under fairly mild assumptions. Hence, the resulting optimization problem is of large scale and computationally burdensome. In particular, the number of bilinear constraints is of the order of the product of number of samples, size of the network, and the number of contingencies. Thus, due to this coupling, the problem becomes difficult to solve when the network and the number of samples are large. To overcome this limitation, we propose the following two-step framework:
\noindent \emph{Step 1:} Construct a polyhedral set $\Xirob \subset \Xi$  such that 
    \begin{align}\label{eq:xirob-guarantee}
        \inf_{\Qb \in \MM_N^\theta}  \Qb(\uncertain \in \Xirob ) \ge 1-\epsilon.
    \end{align}
    
\noindent \emph{Step 2:} Solve the robust optimization problem
    \begin{equation}\label{eq:robust-gen}
    \begin{aligned}
        \min_x & \quad c^{\top}x 
        \\
        \st &  \quad \Lambda x  \leq \beta,  
        \\
        &   \quad e_k^{\top}x+f_k^{\top}\uncertain + x^{\top}F_k\uncertain  \leq h_k,  \forall  \xi \in \Xirob, k \in \until{K}. 
    \end{aligned}
    \end{equation}

Note that due to the property of $\Xirob$ obtained in Step 1 of the above framework, any optimal solution of the robust problem~\eqref{eq:robust-gen} is a feasible solution of the DR problem~\eqref{eq:DRCCP}. We will establish this property later. How close  the optimizer is to the optimal solution of~\eqref{eq:DRCCP} depends on the ability of finding the ``most relevant'' set that satisfies the condition~\eqref{eq:xirob-guarantee}. Moreover, the characterization of $\Xirob$, that of being defined by linear or quadratic constraints for example, also determines the computational complexity of the two steps. Thus, the choice of $\Xirob$ inherently involves the trade-off between the accuracy and computational performance of the method. Notice that the complexity of the first step is independent of the size of the network and the number of contingencies, it only depends on the number of samples and the size of the uncertainty. On the other hand, the second step is agnostic of the number of data points. This proves to be computationally advantageous, as will be discussed further in the numerical example.

In~\cite{BP-AH-SB-DC-AC:20}, the same two-step approach was proposed but the $\Xirob$ set was restricted to be a hyper-rectangle. This choice was conservative as it could not take into account the correlation present between various renewables. Next, we provide one way of determining $\Xirob$ which overcomes this limitation. Later, we focus on the tractability of the robust problem.

\subsection{Constructing $\Xirob$}

We assume that the uncertain generation across the network is correlated in a sparse manner, i.e., renewable generators can be divided into subsets and the correlation exists only among generators belonging to one subset. To make this precise, let $\WW = \{w_1, w_2, \dots, w_{n_\uncertain}\}$ be the set of all renewable generators. We partition these into $n_W$ number of sets $\{\WW_1, \WW_2, \dots, \WW_{n_W}\}$ such that $\WW_i \subset \WW$ for all $i \in [n_W]$,  $\WW_i \cap \WW_j = \emptyset$ for all $i, j \in [n_W]$, and $\bigcup_{i=1}^{n_W} \WW_i =  \WW$. We assume that uncertain generations of two generators are correlated only if they belong to one partition. Consequently, we assume that the support of every distribution belonging to the ambiguity set $\MM_N^\theta$ is given as 
\begin{align}\label{eq:support-form}
    \Xi = \prod_{i\in \until{n_W}} \Xi^i, \text{ with } \Xi^i = \setdef{\xi^i \in \real^{\abs{\WW_i}}}{\Gamma^i \xi^i \le \rho^i},
\end{align}
where $\Gamma^i \in \real^{u_i \times \abs{\WW_i}}$ and $\rho^i \in \real^{u_i}$ define the support of the marginal of the distribution over the partition $\WW_i$. Due to correlations restricted to a partition,  we aim to find an appropriate polyhedral set $\Xirob^i \subset \real^{\abs{\WW_i}}$ for each of the partitions by solving a DR problem. Then, we construct $\Xirob := \prod_{i=1}^{n_W} \Xirob^i$. This procedure is summarized in Algorithm~\ref{alg:c-xirob}. Its detailed description is given below.

\begin{quote}
    \emph{[Informal description of Algorithm~\ref{alg:c-xirob}]:} The procedure admits as input the dataset $\Xihat^N$, violation level $\epsilon$, radius $\theta$, and partition $\{\WW_1, \WW_2, \dots, \WW_{n_W}\}$. For each sample $\uncertainhat_m$, the components corresponding to the partition $\WW_i$ are collected in the vector $(\uncertainhat_m)^i$. The algorithm starts with computing the empirical covariance matrix $\widehat{\mathrm{Cov}}^i \in \real^{\abs{\WW_i}\times\abs{\WW_i}}$ for each partition using the samples (Line~\ref{ln:cov}). Next, eigenvalues $\{\lambda_{(i,1)}, \dots, \lambda_{(i,\abs{\WW_i})}\}$ and eigenvectors $\{v_{(i,1)}, \dots, v_{(i,\abs{\WW_i})}\}$ of $\widehat{\mathrm{Cov}}^i$ are computed (Line~\ref{ln:compute}), where without loss of generality, we assume that the eigenvalues are arranged in an ascending manner. Our idea of constructing $\Xirob^i$ involves intersecting a number of \emph{simple} sets, each consisting of a set of points lying between two hyperplanes that share a common normal. These simple sets can be divided into two groups, one where the normal is aligned with one of the axis of the Euclidean space where $\Xirob^i$ lies, i.e., $\real^{\abs{\WW_i}}$ and the other with the normal being the eigenvectors of $\widehat{\mathrm{Cov}}^i$ with the smallest eigenvalues. To this end,  we select a number $0 \le \kappa_i < \abs{\WW_i}$ determining the number of eigenvector-based simple sets to be constructed. An illustration of this construction in two dimensions is given in Figure \ref{fig:propsed-illustration}.
    We set $n^{\mathrm{DR}}_i := \kappa_i + \abs{\WW_i}$ as the number of sets that will be intersected to determine $\Xirob^i$. 
     Specifically, we set
     \begin{align}\label{eq:xirobi}
         \Xirob^i := \bigcap_{j=1}^{n^{\mathrm{DR}}_i} \Xirob^{i,j},
     \end{align}
     where each set $\Xirob^{i,j}$ is of the form
    \begin{align}\label{eq:xirobij}
         \Xirob^{i,j} := \setdefb{\uncertain^i \in \real^{\abs{\WW_i}}}{\underline{b}_{i,j} \le \zeta_{i,j}^\top \uncertain^i \le \overline{b}_{i,j}}.
    \end{align}
    For the above set, the vector $\zeta_{i,j} \in \real^{\abs{\WW_i}}$ determines the normal to the set of hyperplanes $\zeta_{i,j}^\top \uncertain^i = \underline{b}_{i,j}$ and $\zeta_{i,j}^\top \uncertain^i = \overline{b}_{i,j}$ that construct $\Xirob^{i,j}$. If $1\le j \le \abs{\WW_i}$, then we set $\zeta_{i,j} \in \real^{\abs{\WW_i}}$ as the unit vector along the $j$-th coordinate, otherwise $\zeta_{i,j} = v_{(i,j-\abs{\WW_i})}$ is the eigenvector of the covariance matrix corresponding to the eigenvalue $\lambda_{(i,j-\abs{\WW_i})}$. The bounds $\underline{b}_{i,j},\overline{b}_{i,j}$ are obtained as the solutions of the DR optimization problem~\eqref{eq:parallel}. Finally, we set $\Xirob = \prod_{i=1}^{n_W} \Xirob^i$ as the Cartesian product of the polyhedral sets obtained for each partition. We denote $\ndr := \sum_{i=1}^{n_W} \ndr_i$. 
    \oprocend
\end{quote}

\begin{figure}[h]
\centering
\includegraphics[scale=0.5]{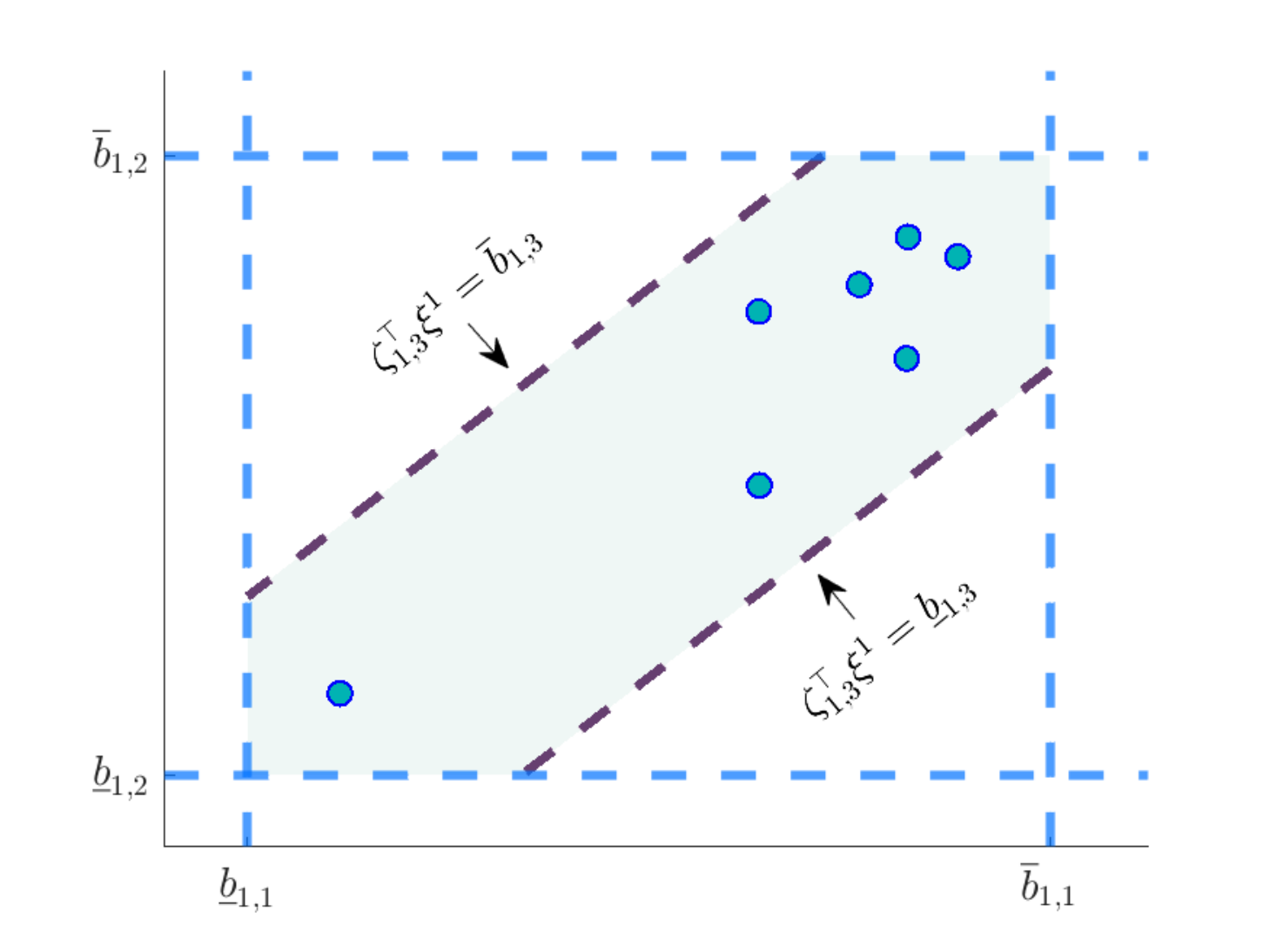}
\caption{Illustration of constructing $\Xirob$. This example contains two correlated random variables that form one partition, i.e., $n_W=1$ and $\abs{\WW_1}=2$. For this partition, the number of simple sets is $n^{\mathrm{DR}}_1 := 3$, where we select $\kappa_1 = 1$. Thus, $\Xirob$ is constructed as the intersection of three simple sets, i.e., $\Xirob = \Xirob^1 \cap \Xirob^2 \cap \Xirob^3$. We are given seven samples depicted by blue dots in the plot. The set $\Xirob^1$ is the set of points that lie between the two vertical lines. That is, for this set the normal is aligned along the horizontal axis. Analogously, the set $\Xirob^2$ is the set of points that lie between the two horizontal lines forming the rectangle with $\Xirob^1$ altogether. The set of all points that lie between the inclined lines is $\Xirob^3$. This set is unbounded but not depicted that way for clarity. Finally, $\Xirob$ is the intersection of all $\Xirob^j$, $j=1,\dots,3$ which is represented as a green shaded region. 
}
\label{fig:propsed-illustration}
\end{figure}
\begin{algorithm}
    \SetAlgoLined
	\DontPrintSemicolon
	\SetKwInOut{Input}{Input}
    \SetKwInOut{Output}{Output}
	\SetKwInOut{init}{Initialize}
	\SetKwInOut{giv}{Data}
	\giv{ Data $\Xihat = \{\uncertainhat^1, \uncertainhat^2, \dots, \uncertainhat^N\}$, partition $\{\WW_1, \WW_2, \dots, \WW_{n_W}\}$, Wasserstein radius $\theta$, and tolerance $\epsilon$
}
    \For{$i \in \until{n_W}$}{
    Compute the empirical covariance matrix $\widehat{\mathrm{Cov}}^i$ for each partition $\WW_i$ \label{ln:cov} \;
    Compute eigenvectors and eigenvalues of $\widehat{\mathrm{Cov}}^i$ \label{ln:compute}\;
	Select $\kappa_i \in \{0,\dots,\abs{\WW_i}-1\}$ to be the number of eigenvectors to be used to find hyperplane sets \;
	Set $n^{\mathrm{DR}}_i := \kappa_i + \abs{\WW_i}$\;
	}

	\For{$i \in \until{n_W}$}{
	Construct $\Xirob^i$ as~\eqref{eq:xirobi} using definition~\eqref{eq:xirobij} and solving~\eqref{eq:parallel} \;
	}
	Construct $\Xirob = \prod_{i=1}^{n_W} \Xirob^i$\;
	\caption{Constructing $\Xirob$}
	\label{alg:c-xirob}
\end{algorithm}

The key ingredient of Algorithm~\ref{alg:c-xirob} is the DR optimization problem explained next that gives the half-spaces defining each $\Xirob^{i,j}$. Given a partition $i \in \setr{1,\dots,n_W}$ and an normal vector $\zeta_{i,j}$, consider the following DR problem  
\begin{subequations}\label{eq:parallel}
    \begin{align}
        \min_{(\underline{b},\overline{b})} & \quad \overline{b} - \underline{b}
        \\
        \st &  \quad \underline{b} \le \overline{b} ,
        \\
        &  \quad \inf_{\Qb \in \MM_N^\theta} \Qb \Bigl( \max \{\zeta_{i,j}^{\top} \uncertain^i - \overline{b}, \underline{b}- \zeta_{i,j}^{\top}\uncertain^i\} \le 0 \Bigr) \notag
        \\
        & \qquad \qquad \qquad \ge 1- \frac{\epsilon}{\ndr}. \label{eq:parallel-sup}
\end{align}
\end{subequations}
In the above problem, the constraint~\eqref{eq:parallel-sup} enforces that the probability mass placed on the set $\setdef{\uncertain^i}{\underline{b} \le \zeta_{i,j}^\top \uncertain^i \le \overline{b}}$ for any feasible point $(\underline{b},\overline{b})$ is no less than $1 - \epsilon/ \ndr$ under any distribution in $\MM_N^\theta$. This constraint eventually ensures that we identify the right set $\Xirob$. The above problem is difficult to solve in practice as is. Below we provide a reformulation that helps in computation. Subsequently, we provide a heuristic to find an approximate optimizer of the problem.
\begin{proposition} \longthmtitle{Finite-dimensional reformulation of~\eqref{eq:parallel}}\label{pr:tractable}
    Assume that the support of each distribution in $\MM_N^\theta$ is of the form~\eqref{eq:support-form}. Consider problem~\eqref{eq:parallel} written for some partition $i \in \until{n_W}$ and simple set $j \in \until{ n_i^{\mathrm{DR}}}$. 
    Given a normal $\zeta \in \real^{\abs{\WW_i}}$ and a constant $b \in \real$,  we define the notation
    \begin{align}\label{eq:HH_i}
        \mathcal{H}_i(\zeta,b) := \Xi^i \cap \setdef{\xi^i}{\zeta^\top \xi^i = b}.
    \end{align}
    as the intersection of $\Xi^i$ and the hyperplane formed with $(\zeta,b)$. Then, the optimization problem~\eqref{eq:parallel} is equivalent to 
     \begin{subequations}\label{eqn:DRBounds2}
         \begin{align}
             \min & \quad \overline{b} - \underline{b}
             \\
             \wrt & \quad {\underline{b},\overline{b},\lambda \ge 0, \{s_m \ge 0\}_{m=1}^N}
             \\
             \st & \quad   \underline{b} \leq \overline{b},
             \\
             & \quad \lambda \theta + \frac{1}{N}  \sum_{m=1}^{N} s_m \leq \frac{\epsilon}{\ndr}, \label{eqn:DRBounds2_sum}
             \\
             & \quad \text{ for all } m \in \until{N}: \notag
             \\
             & \quad \qquad  {s_m}  \geq 1  -  \lambda \dist\Bigl((\uncertainhat_m)^i,\mathcal{H}_i(\zeta_{i,j},\overline{b})\Bigr), \label{eqn:DRBounds2_d1}
             \\
             & \quad \qquad  {s_m} \geq 1- \lambda \dist\Bigl((\uncertainhat_m)^i,\mathcal{H}_i(\zeta_{i,j},\underline{b})\Bigr),  \label{eqn:DRBounds2_d2}
         \end{align}
    \end{subequations}
    where $\dist\Bigl((\uncertainhat_m)^i,\mathcal{H}_i(\zeta_{i,j},\overline{b})\Bigr)$ is the distance between the sample $(\uncertainhat_m)^i$ and the set $\mathcal{H}_i(\zeta_{i,j},\overline{b})$ under the norm $\norm{\cdot}$ and the notation 
    $\dist\Bigl((\uncertainhat_m)^i,\mathcal{H}_i(\zeta_{i,j},\underline{b})\Bigr)$ is analogous. These distances can be computed by solving the following optimization problem
    \begin{align} \label{eq:dist}
    & \dist\Bigl((\uncertainhat_m)^i,\mathcal{H}_i(\zeta_{i,j},b)\Bigr) \notag
    \\
    & \qquad = \begin{cases} \max\limits_{y,z} &  (b - \zeta^{\top}_{i,j} (\uncertainhat_m)^i)y - (\rho^i - \Gamma^i  (\uncertainhat_m)^i)^{\top} z 
    \\
    \st &  \norm{\zeta y - (\Gamma^i)^{\top} z}_* \leq 1,
    \\
    &  y \ge 0, z \in \realnonnegative^{\abs{\WW_i}},
    \end{cases}
    \end{align}  
    where $\norm{\cdot}_*$ is the dual norm of $\norm{\cdot}$ used in~\eqref{eqn:W-metric}.
\end{proposition}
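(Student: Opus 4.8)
The plan is to reduce the distributionally robust chance constraint \eqref{eq:parallel-sup} to a worst-case \emph{violation} probability and then invoke Wasserstein strong duality. First I would note that the event inside \eqref{eq:parallel-sup} is exactly the slab $\setdef{\uncertain^i}{\underline{b} \le \zeta_{i,j}^\top \uncertain^i \le \overline{b}}$, so that its complement is the closed violation region $\mathcal{V} := \setr{\zeta_{i,j}^\top \uncertain^i \ge \overline{b}} \cup \setr{\zeta_{i,j}^\top \uncertain^i \le \underline{b}}$. Since $\inf_{\Qb \in \MM_N^\theta} \Qb(\uncertain^i \in \text{slab}) = 1 - \sup_{\Qb \in \MM_N^\theta} \Qb(\uncertain^i \in \mathcal{V})$, constraint \eqref{eq:parallel-sup} is equivalent to $\sup_{\Qb \in \MM_N^\theta} \Eb_{\Qb}[\indicatorfun_{\mathcal{V}}(\uncertain^i)] \le \epsilon/\ndr$. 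This recasts the problem as a worst-case expectation of a bounded, upper-semicontinuous loss over a Wasserstein ball on the compact support $\Xi^i$.

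Second, I would apply the strong duality theorem for Wasserstein distributionally robust expectations~\cite{PME-DK:18}, whose hypotheses (compact support $\Xi^i$, bounded upper-semicontinuous $\indicatorfun_{\mathcal{V}}$) hold here, to obtain
\[
\sup_{\Qb \in \MM_N^\theta} \Eb_{\Qb}[\indicatorfun_{\mathcal{V}}] = \inf_{\lambda \ge 0} \Bigl[ \lambda \theta + \tfrac{1}{N} \sum_{m=1}^N \sup_{\uncertain^i \in \Xi^i} \bigl( \indicatorfun_{\mathcal{V}}(\uncertain^i) - \lambda \norm{\uncertain^i - (\uncertainhat_m)^i} \bigr) \Bigr].
\]
The core computation is the inner supremum. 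Because the indicator is $1$ on $\mathcal{V}$, $0$ elsewhere, and capped at $1$, a short case analysis shows it equals $\max\{0,\, 1 - \lambda\, \dist((\uncertainhat_m)^i, \mathcal{V} \cap \Xi^i)\}$: either one keeps the point where the indicator vanishes (value $\le 0$), or one pays to move the sample to the nearest point of $\mathcal{V} \cap \Xi^i$. Since $\mathcal{V}$ is the union of the two half-spaces, its distance is the minimum of the two individual (support-restricted) half-space distances, so using $\lambda \ge 0$ we get $1 - \lambda\,\dist((\uncertainhat_m)^i, \mathcal{V}\cap\Xi^i) = \max\{1 - \lambda\,\dist((\uncertainhat_m)^i, \mathcal{H}_i(\zeta_{i,j},\overline{b})),\, 1 - \lambda\,\dist((\uncertainhat_m)^i, \mathcal{H}_i(\zeta_{i,j},\underline{b}))\}$. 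Introducing epigraph variables $s_m$ dominating these two affine-in-$\lambda$ terms and $0$, and demanding the dual objective not exceed $\epsilon/\ndr$ for some $\lambda \ge 0$, reproduces exactly \eqref{eqn:DRBounds2_sum}--\eqref{eqn:DRBounds2_d2}, while the objective $\overline{b}-\underline{b}$ and the constraint $\underline{b}\le\overline{b}$ carry over unchanged; this establishes equivalence with \eqref{eqn:DRBounds2}.

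Third, for the distance formula \eqref{eq:dist} I would fix $b$ and write $\dist((\uncertainhat_m)^i, \mathcal{H}_i(\zeta_{i,j},b))$ as the value of the convex projection $\min\{\norm{\uncertain^i - (\uncertainhat_m)^i} : \Gamma^i \uncertain^i \le \rho^i,\ \zeta_{i,j}^\top \uncertain^i \ge b\}$ over the relevant half-space side. Shifting by the sample and dualizing the linear constraints with multipliers $y \ge 0$ (for the level constraint) and $z \ge 0$ (for $\Gamma^i \uncertain^i \le \rho^i$), the residual minimization over the shift $u$ collapses through the identity $\min_u[\norm{u} - w^\top u] = 0$ when $\norm{w}_* \le 1$ and $-\infty$ otherwise, which yields precisely the dual \eqref{eq:dist} with the dual-norm constraint $\norm{\zeta_{i,j} y - (\Gamma^i)^\top z}_* \le 1$. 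Slater's condition (nonemptiness of $\mathcal{H}_i$ together with compactness) closes the duality gap.

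The main obstacle I anticipate is twofold. The delicate modeling point is the exact evaluation of the inner supremum and, in particular, verifying that the quantity appearing in \eqref{eqn:DRBounds2_d1}--\eqref{eqn:DRBounds2_d2} is genuinely the distance to the \emph{violation} half-space on the correct side (so that it vanishes for a sample already violating the bound) rather than to the bounding hyperplane; confirming that the sign conventions in \eqref{eq:dist}, especially the restriction $y \ge 0$, correctly encode the half-spaces $\setr{\zeta_{i,j}^\top\uncertain^i \ge \overline{b}}$ and $\setr{\zeta_{i,j}^\top\uncertain^i \le \underline{b}}$ for both bounds, is the most error-prone bookkeeping. The secondary point is justifying the strong-duality interchange for a \emph{discontinuous} indicator loss, which I would handle by appealing to the upper-semicontinuous, compact-support version of the Wasserstein duality and checking its assumptions. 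Once these two are settled, the remaining manipulations are routine convex duality.
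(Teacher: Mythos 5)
Your proposal is correct and follows essentially the same route as the paper's proof: recast \eqref{eq:parallel-sup} as a worst-case violation probability, invoke Wasserstein strong duality for the indicator loss (the paper cites \cite[Propositions 1 and 4]{RG-AK:16} where you cite \cite{PME-DK:18}), evaluate the inner supremum by the same case analysis to obtain $\max\{0,\,1-\lambda\,\dist(\cdot,\cdot)\}$, split the distance to the union of the two violation half-spaces into the minimum of the two hyperplane-piece distances, and introduce the epigraph variables $s_m$; even the distance formula \eqref{eq:dist}, which you rederive by convex duality with a dual-norm constraint, is simply cited by the paper from \cite[Lemma 1]{AA-CO-JK-ZD-JT-FV:22}. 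The only step you gloss over is passing from the ambiguity set of distributions on the joint support $\Xi$ to ``a Wasserstein ball on $\Xi^i$'': the paper instead applies duality on the full space first and only then uses the product form \eqref{eq:support-form} to collapse the inner supremum to the $i$-th block, which avoids having to justify separately that the worst case over joint distributions equals the worst case over the marginal ball (a true fact, but one requiring a projection/lifting argument that your write-up does not supply).
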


\begin{proof}
    The key step involves reformulating~\eqref{eq:parallel-sup}. Recall that for a point $\uncertain \in \real^{n_\uncertain}$, we use the notation $\uncertain = (\uncertain^1, \uncertain^2, \dots, \uncertain^{n_W})$ to represent the components belonging to each of the partitions. That is, $\uncertain^i \in \real^{\abs{\WW_i}}$ consists of  components corresponding to the partition $i$. Denote $\bt = (\bu,\bo)$ and  we consider  $Z(\bt,\uncertain) := \max \{{\zeta_{i,j}}^{\top} \uncertain^i - \bo, \bu- {\zeta_{i,j}}^{\top}\uncertain^i \}$. Then, we compute
\begin{align*}
    & \sup_{\Qb \in \MM_\theta^N} \Qb ( Z(\bt,\uncertain) > 0)  = \sup_{\Qb \in \MM_\theta^N} \Eb_\Qb [\mathbbm{1}_{\mathrm{cl} \setdef{\uncertain \in \Xi}{Z(\bt,\uncertain) > 0}}]
\end{align*} 
where the equality follows from \cite[Proposition 4]{RG-AK:16}. Following from \cite[Proposition 1]{RG-AK:16}, it is equivalant to
\begin{align*}
    \inf_{\lambda \geq 0, s} & \quad \lambda\theta +  \dfrac{1}{N}\sum_{m=1}^{N}{s_m} 
    \\
    \st & \quad  {s_m} \geq \sup_{\uncertain \in \Xi} \bigl[{\mathbbm{1}}_{\mathrm{cl} \setdef{\uncertain \in \Xi}{Z(\bt,\uncertain) > 0}} - \lambda \norm{\uncertain - \uncertainhat_m}\bigr].
\end{align*} 

Let us define $\Xi_1 := \mathrm{cl} \setdef{\uncertain \in \Xi}{Z(\bt,\uncertain) > 0}$ and $\Xi_2 = \Xi \backslash \Xi_1$. Then, we have
\begin{align} \label{eq:sm-ineq-1}
    {s_m} \geq \max \Bigl\{\sup_{\uncertain \in \Xi_1}1 - \lambda{\norm{\uncertain - \uncertainhat_m}} , \sup_{\uncertain \in \Xi_2} - \lambda{\norm{\uncertain - \uncertainhat_m}} \Bigr\}.
\end{align}
Note that since the function $Z$ only depends on the component $\uncertain^i$ of the vector $\uncertain$, we can write 
\begin{subequations}\label{eq:xi-split}
\begin{align*}
    \Xi_1 & = \prod_{k=1}^{i-1} \Xi^k \times \Xi^i_1 \times \prod_{k=i+1}^{n_W} \Xi^k,
    \\
    \Xi_2 & = \prod_{k=1}^{i-1} \Xi^k \times (\Xi^i \setminus \Xi^i_1) \times \prod_{k=i+1}^{n_W} \Xi^k,
\end{align*}
\end{subequations}
where we use the notation
\begin{align*}
    \Xi_1^i := \mathrm{cl} \setdef{\xi^i \in \Xi^i}{Z(\bt,\xi^i) > 0},
\end{align*}
with a slight abuse with the fact that now $Z(\bt,\xi^i) = \max \{{\zeta_{i,j}}^{\top} \uncertain^i - \bo, \bu- {\zeta_{i,j}}^{\top}\uncertain^i \}$. Using the decomposition~\eqref{eq:xi-split} in~\eqref{eq:sm-ineq-1}, we get
\begin{align}\label{eq:sm-ineq-2}
    {s_m} \geq \max \Bigl\{\sup_{\uncertain^i \in \Xi_1^i}1 - \lambda{\norm{\uncertain^i - (\uncertainhat_m)^i}} \, \, , \, \, \sup_{\uncertain^i \in \Xi^i_2} - \lambda{\norm{\uncertain^i - (\uncertainhat_m)^i}} \Bigr\}.
\end{align}
To handle the terms inside the $\max$ operator, note that if $(\uncertainhat^m)^i \in \Xi^i_1$ the first term is 1 while the second term is non-positive, hence ${s_m} = 1$. Similarly, if $(\uncertainhat^m)^i \in \Xi^i_2$, the second term is zero. Thus, we can rewrite~\eqref{eq:sm-ineq-2} as 
    ${s_m} \geq \max \{0,1- \inf_{\uncertain^i \in \Xi^i_1} {\lambda}\norm{\uncertain^i - (\uncertainhat_m)^i}\}$,
which is equivalent to:
\begin{align}\label{eq:sm-ineq-3}
{s_m} \geq 0, {s_m} \geq 1- \inf_{\uncertain^i \in \Xi^i_1}{\lambda}\norm{\uncertain^i - (\uncertainhat_m)^i}.
\end{align}
Following the definition of $\Xi^i_1$, we have
\begin{align*}
    & \inf_{\uncertain^i \in \Xi^i_1} {\lambda}\norm{\uncertain^i - (\uncertainhat_m)^i} 
    \\
    & = \min \Bigl\{\dist((\uncertainhat_m)^i,\mathcal{H}_i(\zeta_{i,j},\overline{b})),\dist((\uncertainhat_m)^i,\mathcal{H}_i(\zeta_{i,j},\underline{b})) \Bigr\},
\end{align*}
where $\mathcal{H}_i$ is given in~\eqref{eq:HH_i}. Using the above relation, we write~\eqref{eq:sm-ineq-3} equivalently as $s_m \ge 0$, ${s_m} \geq 1- \lambda \dist((\uncertainhat_m)^i,\mathcal{H}_i(\zeta_{i,j},\overline{b}))$, and ${s_m} \geq 1- \lambda \dist((\uncertainhat_m)^i,\mathcal{H}_i(\zeta_{i,j},\underline{b}))$. This shows the equivalence between~\eqref{eq:parallel} and~\eqref{eqn:DRBounds2}. The expression for  $\dist((\uncertainhat_m)^i,\mathcal{H}_i(\zeta,b))$ for a given $\zeta$ as given in~\eqref{eq:dist} is established in~\cite[Lemma 1]{AA-CO-JK-ZD-JT-FV:22}. This completes the proof.
\end{proof} 

\begin{remark}\longthmtitle{Approximating the solution of~\eqref{eq:parallel} using Proposition~\ref{pr:tractable}}
    While Proposition~\ref{pr:tractable} provides a finite-dimensional exact reformulation of~\eqref{eq:parallel}, the resulting  optimization~\eqref{eqn:DRBounds2} is difficult to solve as~\eqref{eqn:DRBounds2_d1} and~\eqref{eqn:DRBounds2_d2} are bilinear in nature if one uses the formulation~\eqref{eq:dist}. Therefore, we provide a heuristic way of solving~\eqref{eqn:DRBounds2}. We implement an iterative procedure. We start with some values for $\underline{b}$ and $\overline{b}$ and check for the feasibility of constraints~\eqref{eqn:DRBounds2_sum} to~\eqref{eqn:DRBounds2_d2}. Since  $\underline{b}$ and $\overline{b}$ are fixed, all distances in~\eqref{eqn:DRBounds2_d1} and~\eqref{eqn:DRBounds2_d2} can be computed by solving the convex program~\eqref{eq:dist}. If the feasibility is satisfied, then we shrink the interval $[\underline{b},\overline{b}]$ by a factor and check the feasibility again. We iterate this process till we find the shortest possible interval. 
    
    Note that instead of appealing to the above heuristic, one can use~\eqref{eq:dist} in constraints~\eqref{eqn:DRBounds2_d1} and~\eqref{eqn:DRBounds2_d2} to form a bilinear optimization problem and solve it as is using methods tailor-made for bilinear problems. In the interest of space, we do not provide further details regarding it. 
\oprocend
\end{remark}

The following result summarizes the property of $\Xirob$. 
\begin{proposition}\longthmtitle{Guarantees for $\Xirob$}\label{le:prop-xirob}
    Let $\Xirob$ be the output of Algorithm~\ref{alg:c-xirob}. Then, we have
    \begin{align}\label{eq:Xirob-dr-bound}
        \inf_{\Qb \in \MM^\theta_N} \Qb(\uncertain \in \Xirob) \ge 1-\epsilon.
    \end{align}
\end{proposition}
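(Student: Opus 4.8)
The plan is to exploit the product-and-intersection structure of $\Xirob$ together with a union bound over the simple sets. Recall that Algorithm~\ref{alg:c-xirob} outputs $\Xirob = \prod_{i=1}^{n_W} \Xirob^i$ with each $\Xirob^i = \bigcap_{j=1}^{\ndr_i} \Xirob^{i,j}$, and every slab $\Xirob^{i,j}$ is obtained by solving the DR problem~\eqref{eq:parallel} for the normal $\zeta_{i,j}$. The first observation I would make is that each individual slab carries a marginal guarantee: since the optimal bounds $(\underline{b}_{i,j},\overline{b}_{i,j})$ are feasible for~\eqref{eq:parallel}, constraint~\eqref{eq:parallel-sup} reads precisely $\inf_{\Qb \in \MM_N^\theta} \Qb(\uncertain^i \in \Xirob^{i,j}) \ge 1 - \epsilon/\ndr$, because the event $\max\{\zeta_{i,j}^\top \uncertain^i - \overline{b}_{i,j}, \underline{b}_{i,j} - \zeta_{i,j}^\top \uncertain^i\} \le 0$ is exactly $\uncertain^i \in \Xirob^{i,j}$ by~\eqref{eq:xirobij}.

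Next I would translate membership in $\Xirob$ into a conjunction of slab events. By construction, $\uncertain \in \Xirob$ holds if and only if $\uncertain^i \in \Xirob^{i,j}$ for every partition $i \in \until{n_W}$ and every index $j \in \until{\ndr_i}$. Passing to complements, the violation event $\{\uncertain \notin \Xirob\}$ is the union $\bigcup_{i=1}^{n_W} \bigcup_{j=1}^{\ndr_i} \{\uncertain^i \notin \Xirob^{i,j}\}$.

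The core of the argument is then a pointwise union bound. I would fix an arbitrary $\Qb \in \MM_N^\theta$ and apply subadditivity to obtain $\Qb(\uncertain \notin \Xirob) \le \sum_{i=1}^{n_W} \sum_{j=1}^{\ndr_i} \Qb(\uncertain^i \notin \Xirob^{i,j})$. For each term, the marginal guarantee above gives $\Qb(\uncertain^i \notin \Xirob^{i,j}) = 1 - \Qb(\uncertain^i \in \Xirob^{i,j}) \le \epsilon/\ndr$. Counting the slabs, there are $\sum_{i=1}^{n_W} \ndr_i = \ndr$ of them, so the sum is bounded by $\ndr \cdot (\epsilon/\ndr) = \epsilon$. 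Taking the supremum over $\Qb \in \MM_N^\theta$, equivalently the infimum of $\Qb(\uncertain \in \Xirob)$, yields~\eqref{eq:Xirob-dr-bound}.

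The step that needs the most care is the ordering of the operations: the union bound must be applied for each fixed distribution \emph{before} passing to the worst case, rather than by moving the infimum inside the sum. This is legitimate precisely because the per-slab bound $\Qb(\uncertain^i \in \Xirob^{i,j}) \ge 1 - \epsilon/\ndr$ holds uniformly over the entire ambiguity set $\MM_N^\theta$, so it applies to whichever single $\Qb$ we fix. I would also point out that allotting the budget $\epsilon/\ndr$ equally across all $\ndr$ slabs is what makes the bounds sum to exactly $\epsilon$; any split of the total budget summing to $\epsilon$ would serve equally well, and the equal split is simply the natural choice encoded in~\eqref{eq:parallel-sup}.
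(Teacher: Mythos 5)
Your proposal is correct and follows essentially the same route as the paper's proof: decomposing $\Xirob$ into the simple sets $\Xirob^{i,j}$, passing to the complement as a union of per-slab violation events, applying the union bound, and invoking the per-slab guarantee~\eqref{eq:parallel-sup} with the $\epsilon/\ndr$ budget. Your extra care about applying the union bound for a fixed $\Qb$ before taking the worst case is a valid (and slightly more explicit) phrasing of the paper's step $\sup_{\Qb}\Qb(\cup_{i,j}\cdot) \le \sum_{i,j}\sup_{\Qb}\Qb(\cdot)$, not a different argument.
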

\begin{proof}
   Considering $\Xirob = \bigcap_{i=1}^{n_W}\bigcap_{j=1}^{\ndr_i} \Xirob^{i,j}$ and the fact that the complement of the intersection of sets is the same as the union of their complements, we obtain the bound:
   \begin{align*}
    \inf_{\Qb \in \MM^\theta_N} \Qb(\uncertain \in \Xirob) &= 1-\sup_{\Qb \in \MM^\theta_N} \Qb(\uncertain \notin \Xirob)
    \\
    & =  1 - \sup_{\Qb \in \MM^\theta_N} \Qb(\cup_{i=1}^{n_W}\cup_{j=1}^{\ndr_i}  \{\uncertain \notin \Xirob^{i,j}\})
    \\
    & \stackrel{\text{(a)}}{\ge} 1-  \sum_{i=1}^{n_W} \sum_{j=1}^{\ndr_i} \sup_{\Qb \in \MM^\theta_N} \Qb(\uncertain \notin \Xirob^{i,j})
    \\
    & \stackrel{\text{(b)}}{\ge} 1 -  \sum_{i=1}^{n_W} \sum_{j=1}^{\ndr_i} \frac{\epsilon}{\ndr}  = 1-\epsilon.
    \end{align*}
    In the above expression, (a) follows from the union bound, and (b) is due to \eqref{eq:parallel-sup}.
\end{proof}

\subsection{Robust optimization using $\Xirob$}

Having constructed the set $\Xirob$ in the previous section, we propose solving the following robust optimization problem to approximate the solution of the DR problem~\eqref{eq:DRCCP}:
\begin{subequations}\label{eq:poly}
\begin{align}
\min_x & \, \, \,  c^{\top}x 
\\
\st &  \, \, \, \Lambda x  \leq \beta,  
\\
&   \, \, \, e_k^{\top}x+f_k^{\top}\xi+x^{\top}F_k\xi  \leq h_k, \, \forall \xi \in \Xirob, k \in \until{K}.  \label{eq:polyrob}
\end{align}
\end{subequations}
Note that the set $\Xirob$ as constructed in Algorithm~\ref{alg:c-xirob} is polyhedral and it can be written in a general form as
\begin{align}\label{eq:polyset}
  \Xirob = \setdef{ \uncertain \in \real^{n_\uncertain} }{ G \xi \leq g },
\end{align}
where $G \in \real^{q \times n_{\uncertain}}$ and $g \in \real^{q}$, with $q = 2 \ndr$, are obtained by Algorithm~\ref{alg:c-xirob}. In order to solve the robust problem~\eqref{eq:poly}, the following result provides a reformulation where the robust constraint is replaced with a set of affine ones. 

\begin{proposition}\longthmtitle{Reformulation of~\eqref{eq:poly} with polyhedral uncertainty set}
The optimization problem~\eqref{eq:poly} with the uncertainty set given in~\eqref{eq:polyset} is equivalent to the following problem
\begin{subequations}\label{eq:poly2}
\begin{align}
\min_{x,\setr{y_k}_{k \in \until{K}}} & \quad c^{\top}x 
\\
\st &  \quad \Lambda x  \leq \beta,  
\\
&   \quad g^{\top}y_k + e_k^{\top}x  \leq h_k , & \quad \forall k \in \until{K},
\\
&  \quad F_k^{\top}x - G^{\top}y_k + f_k = 0, & \quad \forall k \in \until{K},
\\
&  \quad y_k \geq 0, & \quad \forall k \in \until{K}.
\end{align}
\end{subequations}
Here, equivalence implies that $x^*$ is an optimal solution of~\eqref{eq:poly} if and only if there exist $\setr{y_k^*}_{k \in \until{K}}$ such that $(x^*,\setr{y_k^*}_{k \in \until{K}})$ is an optimal solution of~\eqref{eq:poly2}.
\end{proposition}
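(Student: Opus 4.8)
The plan is to reduce the semi-infinite constraint \eqref{eq:polyrob}, indexed by $k \in \until{K}$ and quantified over all $\xi \in \Xirob$, to a finite system of affine constraints by dualizing the worst-case realization of the uncertainty. First I would fix a decision $x$ and an index $k$, and observe that the constraint $e_k^\top x + f_k^\top \xi + x^\top F_k \xi \le h_k$ for all $\xi \in \Xirob$ holds if and only if
\[
\max_{\xi \in \Xirob} \bigl( f_k^\top \xi + x^\top F_k \xi \bigr) \le h_k - e_k^\top x.
\]
The crucial simplification is that, with $x$ fixed, the bilinear term collapses to a linear one, $x^\top F_k \xi = (F_k^\top x)^\top \xi$, so the inner worst-case problem is the linear program $\max_{\xi} (f_k + F_k^\top x)^\top \xi$ subject to $G\xi \le g$, using the polyhedral description \eqref{eq:polyset} of $\Xirob$ with $\xi$ a free variable.

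Next I would dualize this inner LP. Attaching a multiplier $y_k \ge 0$ to the constraint $G\xi \le g$, its dual is $\min_{y_k \ge 0} g^\top y_k$ subject to $G^\top y_k = f_k + F_k^\top x$. Since $\Xirob \subset \Xi$ and $\Xi$ is compact, the primal feasible set is bounded; assuming it is nonempty, the primal maximum is finite and attained, and strong duality yields
\[
\max_{\xi \in \Xirob} (f_k + F_k^\top x)^\top \xi = \min_{\, y_k \ge 0,\; G^\top y_k = f_k + F_k^\top x} g^\top y_k ,
\]
with the dual optimum also attained. Substituting this identity, the worst-case inequality becomes the requirement that there exist $y_k \ge 0$ with $F_k^\top x - G^\top y_k + f_k = 0$ and $g^\top y_k + e_k^\top x \le h_k$, which are precisely the constraints carried by $y_k$ in \eqref{eq:poly2}.

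It remains to lift this per-constraint equivalence to the level of the two optimization problems. Since the objective $c^\top x$ and the deterministic block $\Lambda x \le \beta$ are identical in \eqref{eq:poly} and \eqref{eq:poly2}, I would argue feasibility in both directions. If $(x^*, \setr{y_k^*})$ is feasible for \eqref{eq:poly2}, then weak duality alone gives $\max_{\xi \in \Xirob}(f_k + F_k^\top x^*)^\top \xi \le g^\top y_k^* \le h_k - e_k^\top x^*$ for each $k$, so $x^*$ satisfies \eqref{eq:polyrob} and is feasible for \eqref{eq:poly} at the same cost. Conversely, if $x^*$ is feasible for \eqref{eq:poly}, the worst-case value is at most $h_k - e_k^\top x^*$, and attainment of the dual optimum supplies multipliers $y_k^*$ meeting every constraint of \eqref{eq:poly2}, again at equal cost. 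Because the $y_k$ do not appear in the objective and are optimized jointly with $x$, the existential quantifier on the multipliers is absorbed into the minimization, giving the stated correspondence of optimal solutions.

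The main obstacle is not the algebra but the careful justification of strong duality for the inner LP: one must confirm the primal is feasible and bounded so that both optima are finite and attained. Boundedness follows from $\Xirob \subset \Xi$ compact, and nonemptiness of $\Xirob$ may be assumed, since an empty $\Xirob$ renders \eqref{eq:polyrob} vacuous. I would also note that only weak duality is needed for the implication \eqref{eq:poly2} $\Rightarrow$ \eqref{eq:poly}, whereas the attainment guaranteed by strong duality is what produces the explicit certificates $y_k^*$ required for the reverse implication.
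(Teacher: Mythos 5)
Your proposal is correct and follows essentially the same route as the paper: rewrite each semi-infinite constraint as an inner linear program $\max_{\xi \in \Xirob}(f_k + F_k^\top x)^\top \xi \le h_k - e_k^\top x$ over the polyhedron $G\xi \le g$, and replace it by its LP dual in the multipliers $y_k \ge 0$, which produces exactly the constraints of~\eqref{eq:poly2}. The only difference is one of detail, not of method: you explicitly verify the strong-duality/attainment hypotheses (boundedness from $\Xirob \subset \Xi$ compact, nonemptiness of $\Xirob$) and separate the weak-duality direction from the attainment direction, whereas the paper simply asserts the primal--dual equality and concludes.
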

\begin{proof}
The program \eqref{eq:poly} reads as:
\begin{subequations}\label{eq:poly2}
\begin{align}
\min & \quad c^{\top}x 
\\
\st &  \quad \Lambda x  \leq \beta,  
\\
&   \quad \max_{\uncertain \in \Xirob} (f_k^{\top}+x^{\top}F_k)\uncertain  \leq h_k - e_k^{\top}x,  \quad \forall k \in \until{k}. \label{eq:poly2_max}
\end{align}
\end{subequations}
We use duality to reformulate the semi-infinite constraint~\eqref{eq:poly2_max} in the above problem. Note the following primal-dual pair of optimization problems:
\begin{align*}
    \max_{\uncertain \in \Xirob} (f_k^{\top}+x^{\top}F_k)\uncertain = \begin{cases} \underset{y_k \geq 0}{\min } & \quad y_k^\top g 
    \\
    \st & \quad F_k^{\top}x - G^{\top}y_k + f_k = 0.
    \end{cases}
\end{align*}
Using this equivalence in~\eqref{eq:poly2_max} yields the conclusion. 
\end{proof}
The following result summarizes the properties of our proposed method.
\begin{proposition} \longthmtitle{Approximating~\eqref{eq:DRCCP} with~\eqref{eq:poly}} \label{prop:feas}
    Let $x^*$ be an optimal solution of problem~\eqref{eq:poly}, where $\Xirob$ is constructed using Algorithm~\ref{alg:c-xirob}. Then, $x^*$ is a feasible solution of the DR problem~\eqref{eq:DRCCP}.
\end{proposition}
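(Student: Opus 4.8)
The plan is to show directly that the optimizer $x^*$ of the robust problem~\eqref{eq:poly} meets both constraint families of the DR problem~\eqref{eq:DRCCP}. Since the deterministic constraint $\Lambda x \le \beta$ appears verbatim in both~\eqref{eq:poly} and~\eqref{eq:DRCCP}, feasibility of $x^*$ for the robust problem delivers it for free; the entire content therefore lies in verifying the DR chance constraint~\eqref{eq:DRCCP-random}. I would reduce this to the mass guarantee already established for $\Xirob$.

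First I would rewrite robust feasibility as a deterministic set inclusion. Because $x^*$ is feasible for~\eqref{eq:poly}, the semi-infinite constraint~\eqref{eq:polyrob} holds, namely $e_k^{\top} x^* + f_k^{\top} \uncertain + (x^*)^{\top} F_k \uncertain \le h_k$ for every $\uncertain \in \Xirob$ and every $k \in \until{K}$. Introducing the safe region $\mathcal{S}(x^*) := \setdef{\uncertain \in \real^{n_\uncertain}}{e_k^{\top} x^* + f_k^{\top} \uncertain + (x^*)^{\top} F_k \uncertain \le h_k, \ \forall k \in \until{K}}$, this is precisely the statement $\Xirob \subseteq \mathcal{S}(x^*)$. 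I would observe that for the fixed vector $x^*$ each term $(x^*)^{\top} F_k \uncertain$ is linear in $\uncertain$, so $\mathcal{S}(x^*)$ is an intersection of $K$ half-spaces, hence a closed polyhedron and in particular Borel measurable.

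Next I would propagate this inclusion through every distribution in the ambiguity set. For any fixed $\Qb \in \MM_N^\theta$, monotonicity of probability under set inclusion gives $\Qb(\uncertain \in \mathcal{S}(x^*)) \ge \Qb(\uncertain \in \Xirob)$, and taking the infimum over $\Qb \in \MM_N^\theta$ on both sides preserves the inequality. Applying Proposition~\ref{le:prop-xirob}, which guarantees $\inf_{\Qb \in \MM_N^\theta} \Qb(\uncertain \in \Xirob) \ge 1 - \epsilon$, then yields $\inf_{\Qb \in \MM_N^\theta} \Qb(\uncertain \in \mathcal{S}(x^*)) \ge 1 - \epsilon$. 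Since the event $\{\uncertain \in \mathcal{S}(x^*)\}$ is exactly the joint event appearing inside~\eqref{eq:DRCCP-random}, this is the required chance-constraint guarantee, so $x^*$ is feasible for~\eqref{eq:DRCCP}.

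There is no substantive obstacle here: the only point requiring a moment's care is translating the universally-quantified robust constraint ``for all $\uncertain \in \Xirob$'' into the inclusion $\Xirob \subseteq \mathcal{S}(x^*)$, together with confirming the measurability of $\mathcal{S}(x^*)$ needed to invoke monotonicity of each $\Qb$; both are immediate once one notes that $\mathcal{S}(x^*)$ is polyhedral for fixed $x^*$. The real work has already been discharged in Proposition~\ref{le:prop-xirob}, so I expect this result to be essentially a one-line consequence of the mass guarantee for $\Xirob$ combined with set monotonicity and the infimum being order-preserving.
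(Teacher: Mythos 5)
Your proposal is correct and follows essentially the same route as the paper's own proof: both reduce feasibility of $x^*$ for~\eqref{eq:DRCCP} to the mass guarantee of Proposition~\ref{le:prop-xirob} combined with the fact that robust feasibility on $\Xirob$ implies the chance-constraint event contains $\Xirob$ under every $\Qb \in \MM_N^\theta$. You merely make explicit (via the safe set $\mathcal{S}(x^*)$, its measurability, and monotonicity of measures) the steps the paper compresses into the word ``Thus,'' which is a fair and complete expansion rather than a different argument.
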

\begin{proof}
    Note that from Lemma~\ref{le:prop-xirob}, the set $\Xirob$ has the property~\eqref{eq:Xirob-dr-bound}. Trivially, $x^*$ satisfies~\eqref{eq:polyrob} for all $\xi \in \Xirob$. Thus, constraint~\eqref{eq:DRCCP-random} is satisfied, which means $x^*$ is a feasible solution of the DR problem~\eqref{eq:DRCCP}.
\end{proof}

As a consequence of the above result, if the true probability distribution $\Pb$ belongs to the ambiguity set $\MM_N^{\theta}$ with at least probability $p$ for some $p \in (0,1)$, as is the case for i.i.d samples due to bound given in~\cite{PME-DK:18}, then $x^*$ satisfies the joint chance-constraint~\eqref{eq:SCOPF1-cc} and so is feasible for problem~\eqref{eq:SCOPF1} with at least probability $p$. Moreover, this probability can be tuned by adjusting the radius of the ambiguity set based on the number of samples.

\section{Numerical results}\label{sec:sims}
We test our approach on a modified version of the IEEE~RTS~24-bus reference case system~\cite{CO-PP-MG-JM-MZ:16} with 12 conventional generation units 
(see Figure~\ref{fig:24bus}). Note that at node 15 and 23 two generators are located but for the sake of simplicity they are not depicted separately. The network also includes six wind generation units with each having capacity of 200~MW. The total demand in the system is 2900~MW which is distributed across 17 locations with the distribution factors being same as those of the first time interval in the test case~\cite{CO-PP-MG-JM-MZ:16}. The system operator aims to satisfy the demand using conventional and wind generation units in a cost-optimal manner while satisfying (with high probability) all capacity and flow constraints. That is, we wish to solve problem~\eqref{eq:SCOPF1}. We consider 34 contingencies including all possible line and generator contingencies except the contingency of the line that connects bus 7 and 8 as it disconnects the network. The cost coefficients, the minimum and maximum generation capacities, and maximum reserve capacities of the conventional generators are given in Table~\ref{tab:gen-par}. The reactances and capacities of power lines ($P_l^{\max})$ are taken as the values given for the standard IEEE 24 bus system~~\cite{CO-PP-MG-JM-MZ:16}. The PTDF matrices ($M^c_{gl}, M^c_{wl}, M^c_{dl}$) are  calculated based on~\cite{RC-BW-IW:00}. We aim to achieve less than 5\% violation of the power flow and reserve capacity constraints, so we set $\epsilon = 0.05$. 
In this case wind generators are divided into two different partitions. Wind generators located at bus 3, 5, and 7 belong to one partition, and wind  generators located at bus 16, 21, and 23 are in the other partition. This partitioning strategy takes into account the correlation among the wind generators and reduces the conservativeness of the solution. The historical data is generated using truncated multivariate normal distribution with 100~MW as the mean generation value, and for each partition, the covariance matrix consists of all diagonal entries being 20 and all off-diagonal entries as 16. The random generations are truncated from the below and the above to 80~MW and 120~MW, respectively to avoid infeasibility.
Note that technically wind generators are capable of generating 0~MW to 200~MW which is regarded as their physical bounds. The above data fully specifies the optimization problem~\eqref{eq:SCOPF1}.

We assume that the system operator has access to the samples of the uncertainty in form of historical data. We compare our proposed method with three others that are employed to approximate the solution of the CCSCD problem~\eqref{eq:SCOPF1}. These are explained below:
\begin{enumerate}
    \item \textbf{Worst-Case (WC):} For the worst-case, we solve the CCSCD problem~\eqref{eq:SCOPF1} with the chance-constraint~\eqref{eq:SCOPF1-cc} replaced with robust constraint, where the constraints in~\eqref{eq:SCOPF1-cc} are required to hold for all values of the uncertainty.
    \item \textbf{Scenario:} This is the well known scenario method~\cite{GC-MC:07}, where the constraints in~\eqref{eq:SCOPF1-cc} are imposed for a set of samples of the uncertainty.
    \item \textbf{Robust Optimization with DR Box Uncertainty Set (RO+BoxSet):} This is the approach proposed in~\cite{BP-AH-SB-DC-AC:20} where the set $\Xirob$ used in the robust optimization~\eqref{eq:poly} is restricted to be a hyper-rectangle. This is a special case of our method where the correlation among the wind generators is neglected. 
    \item \textbf{Robust Optimization with DR Polyhedral Uncertainty Set (RO+PolySet):} This is our proposed method, where $\Xirob$ is constructed using Algorithm~\ref{alg:c-xirob}. 
\end{enumerate}
For the above methods,  we compare the cost incurred at the obtained solution and the probability of violating the constraints. The later quantity is obtained by considering a large validation dataset of uncertainty and computing the fraction of samples for which at least one constraint is violated. Note that in the worst-case method, since constraints need to hold for all values of the uncertainty, the violation frequency of the obtained optimal solution is zero. We consider the worst case cost as a benchmark to compare the costs in different methods. All computations are carried out in MATLAB using CVX at the Peregrine HPC cluster of the University of Groningen employing 16GB of memory~\cite{MG-SB:14}.

\begin{figure}[!t]
\centering
\includegraphics[scale=0.35]{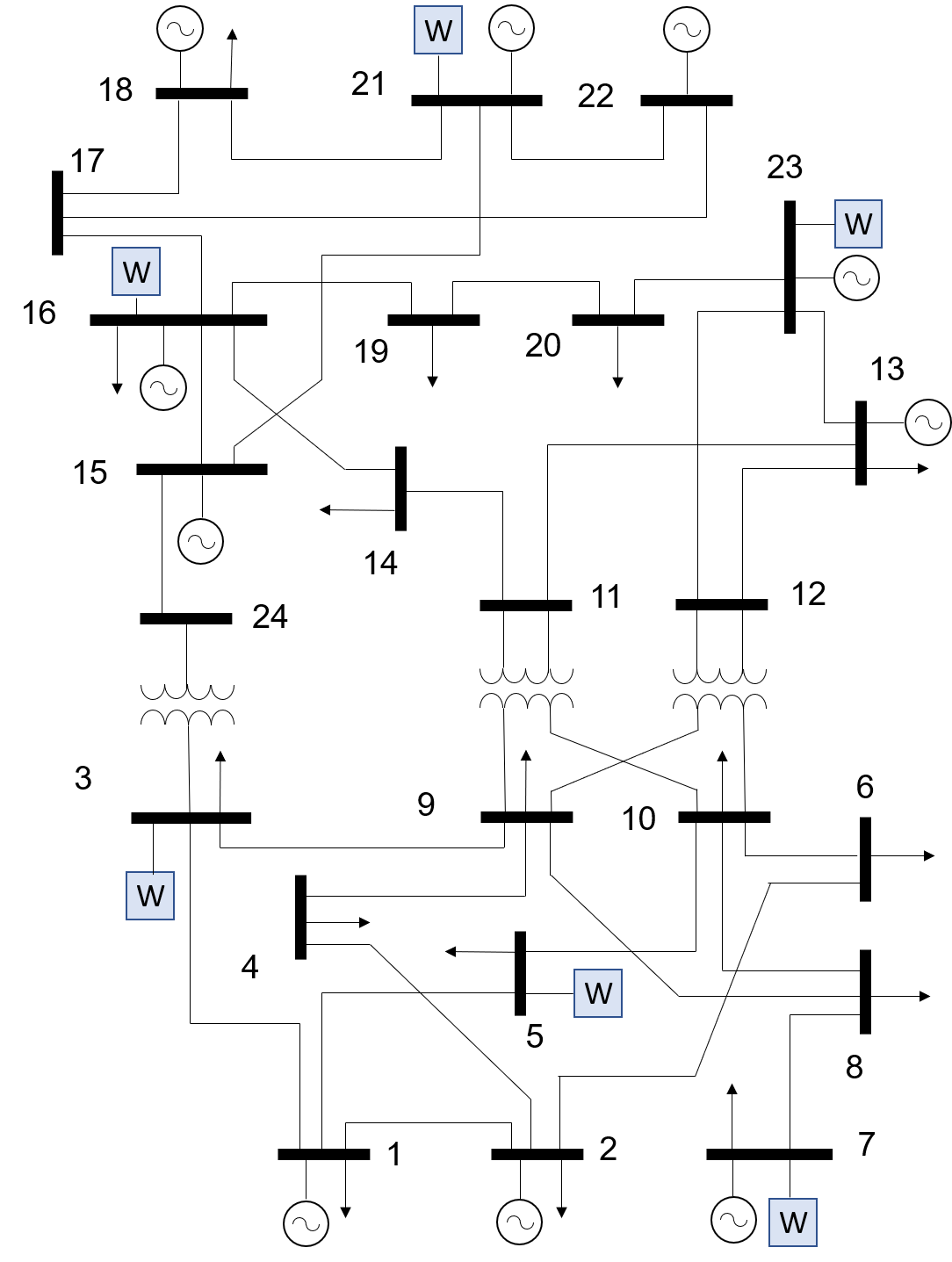}
\caption{\footnotesize The modified IEEE RTS 24-bus system with 12 conventional generators and 6 wind generation units illustrated in light blue. The 17 load locations are represented by arrows.}
\label{fig:24bus}
\end{figure}
In our experiments, we draw 50 samples of the uncertainty from the specified distribution in an i.i.d manner and solve the \textbf{Scenario}, \textbf{RO+BoxSet}, and \textbf{RO+PolySet} methods, where for the later two methods different radii of ambiguity sets are considered for the same set of samples.  We repeat this experiment 50 times. The average values and variances for cost and violation frequency are reported in the Figure~\ref{fig:correlated}. 

Figure \ref{fig:allcorrelated1} shows that for smaller $\theta$ values our method (\textbf{RO+PolySet}) costs less compared to \textbf{RO+BoxSet} while the violation frequency, as shown in Figure \ref{fig:allcorrelated2}, does not change significantly. Therefore, it shows clearly that keeping the safety guarantees same, our method is less conservative than \textbf{RO+BoxSet} for small values of $\theta$. It also shows that both these DR methods result in additional cost and less violation frequency as compared to the \textbf{Scenario} approach, as expected. The advantage of our method is even more clear in Figure~\ref{fig:Pareto} where we plot the trade-off between cost and violation frequency for different value of radii as a Pareto plot. This figure indicates that with the same cost, \textbf{RO+PolySet} guarantees lesser violation frequency as compared to \textbf{RO+BoxSet}. This plot gives the system operator a practical guideline of tuning the radius of the ambiguity set based on the level of importance given to cost-optimality and constraint satisfaction. Increasing $\theta$ results in higher cost but safer dispatch decisions.

We next comment about the computational burden incurred by each method, as outlined in Table~\ref{tab:time}. For smaller number of samples the \textbf{Scenario} approach performs better in terms of computational time compared to the other approaches. However, we recall from earlier discussion that this method performs poorly in terms of constraint violation. For \textbf{RO+BoxSet} and \textbf{RO+PolySet} the variables and constraints mentioned in the table correspond to the robust optimization problem, however, the runtime consists of the time taken to solve the robust as well as all the DR problems. For 50 samples, the runtime for both DR methods is of the same order as the \textbf{Scenario}. However, for 200 samples, the runtime of \textbf{Scenario} is one order higher than the two DR approaches. Comparing \textbf{RO+BoxSet} and \textbf{RO+PolySet}, the former has lower number of variables and constraints and so the runtime is lower than the latter. This computational benefit that \textbf{RO+BoxSet} enjoys comes at the cost of the solution being more conservative when compared to the solution of \textbf{RO+PolySet}, as explained previously. Finally, we comment that the DR problem~\eqref{eqn:DRBounds2} that is solved in a heuristic way and computational times depends on the heuristic algorithm.

\begin{table} 
\scriptsize
\centering
\caption{Data regarding conventional generation units}
\begin{tabular}{lllllllll} 
\hline
Unit \# & Node & $P^{\max}_g$ & $P^{\min}_g$   & \begin{tabular}[c]{@{}l@{}}$R^{\max}_g$\\\end{tabular} & $\costcoeff_g$     & $\costcoeff^+_g(\costcoeff_g^{\out})$ & $H^-_g$  \\ 
\hline
1       & 1    & 152  & 30.4   & 40                                            & 13.32 & 15       & 11  \\
2       & 2    & 152  & 30.4   & 40                                            & 13.32 & 15       & 11  \\
3       & 7    & 350  & 75     & 70                                      & 20.7  & 24       & 16  \\
4       & 13   & 591  & 206.85 & 180                                     & 20.93 & 25       & 17  \\
5       & 15   & 60   & 12     & 60                                            & 26.11 & 28       & 23  \\
6       & 15   & 155  & 54.25  & 30                                            & 10.52 & 16       & 7   \\
7       & 16   & 155  & 54.25  & 30                                            & 10.52 & 16       & 7   \\
8       & 18   & 400  & 100    & 0                                              & 6.02  & 0        & 0   \\
9       & 21   & 400  & 100    & 0                                              & 5.47  & 0        & 0   \\
10      & 22   & 300  & 300    & 0                                              & 0     & 0        & 0   \\
11      & 23   & 310  & 108.5  & 60                                            & 10.52 & 14       & 8   \\
12      & 23   & 350  & 140    & 40                                            & 10.89 & 16       & 8   \\
\hline
\end{tabular}
\label{tab:gen-par}
\end{table}

\begin{figure*} [!t]
\centering
\begin{subfigure}{.4\textwidth}
  \centering
  \includegraphics[scale=0.5]{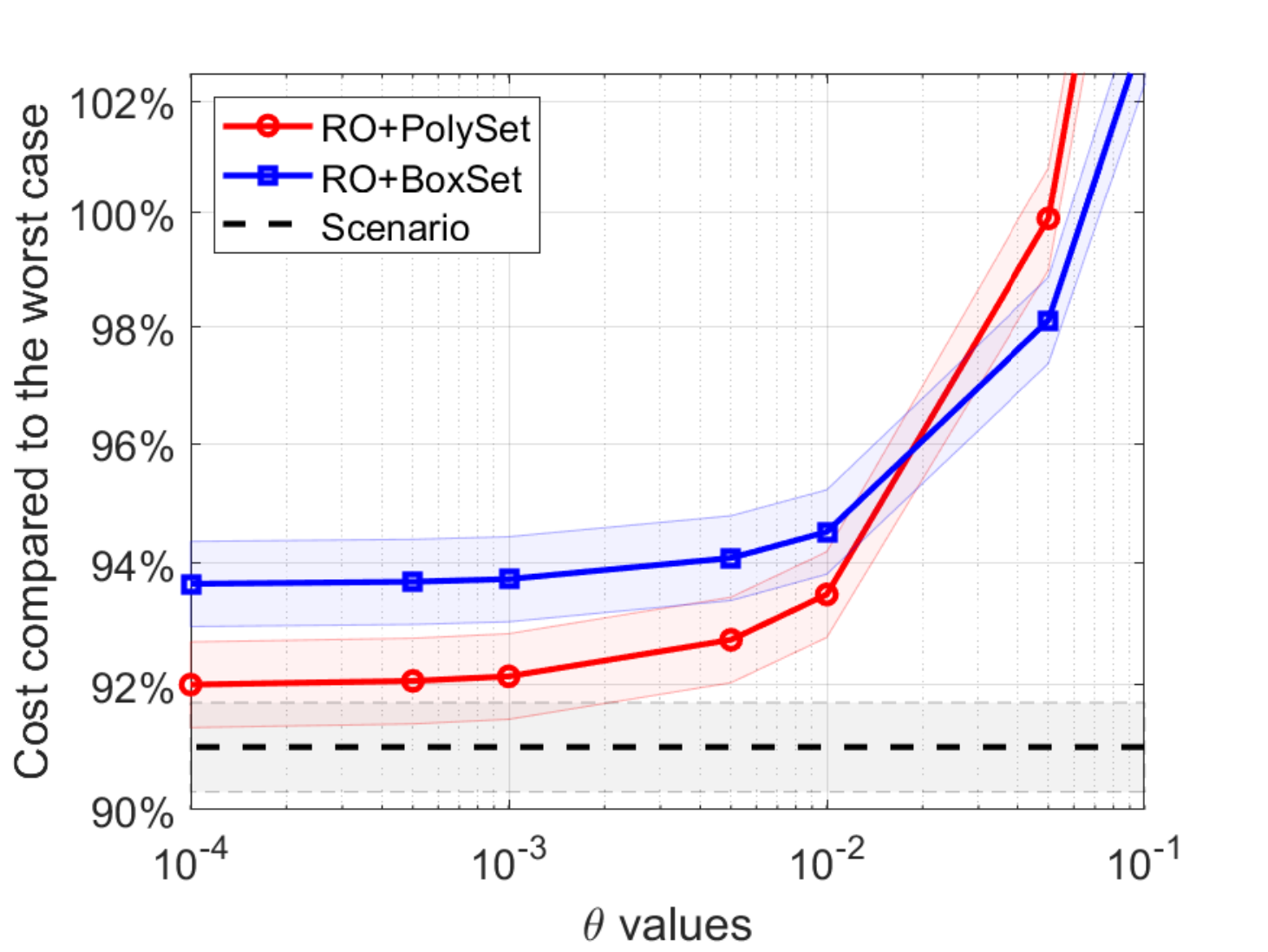}
  \caption{Costs}
  \label{fig:allcorrelated1}
\end{subfigure}
\begin{subfigure}{.4\textwidth}
  \centering
  \includegraphics[scale=0.5]{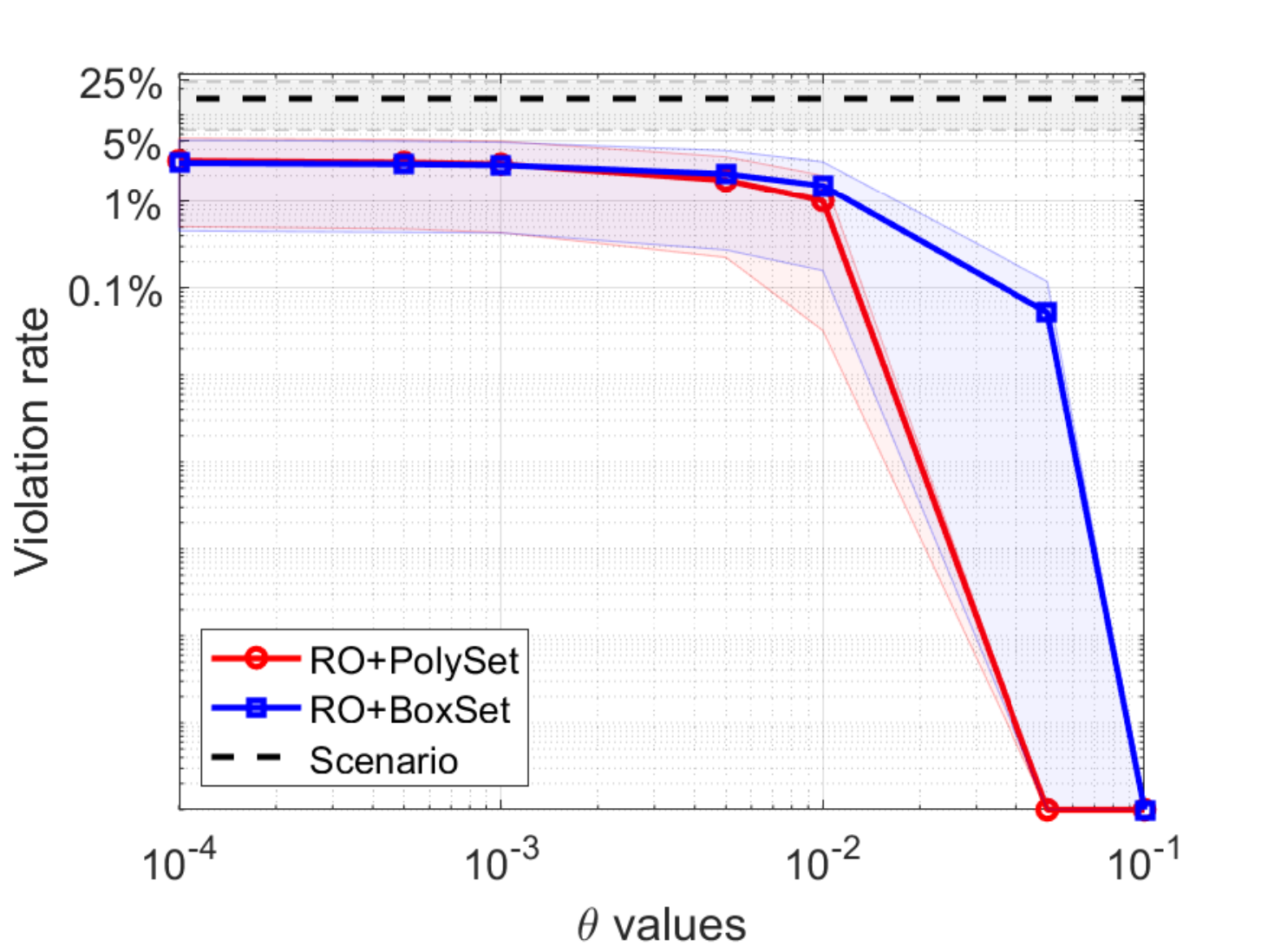}
  \caption{Violation}
  \label{fig:allcorrelated2}
\end{subfigure} \caption{Operational cost and violation frequency of constraints for different ambiguity sets varying in radius~$\theta$. This experiment is repeated 50 times. In each iteration, the problem is solved with a set of 50 correlated samples, using all three methods with $\theta = \{0.0001, 0.0005, 0.001, 0.005, 0.01, 0.05, 0.1\}$.  The shaded regions represent the standard deviation of the results around the average results.}
\label{fig:correlated}
\end{figure*}
\begin{figure}[!t]
\centering
\includegraphics[scale=0.5]{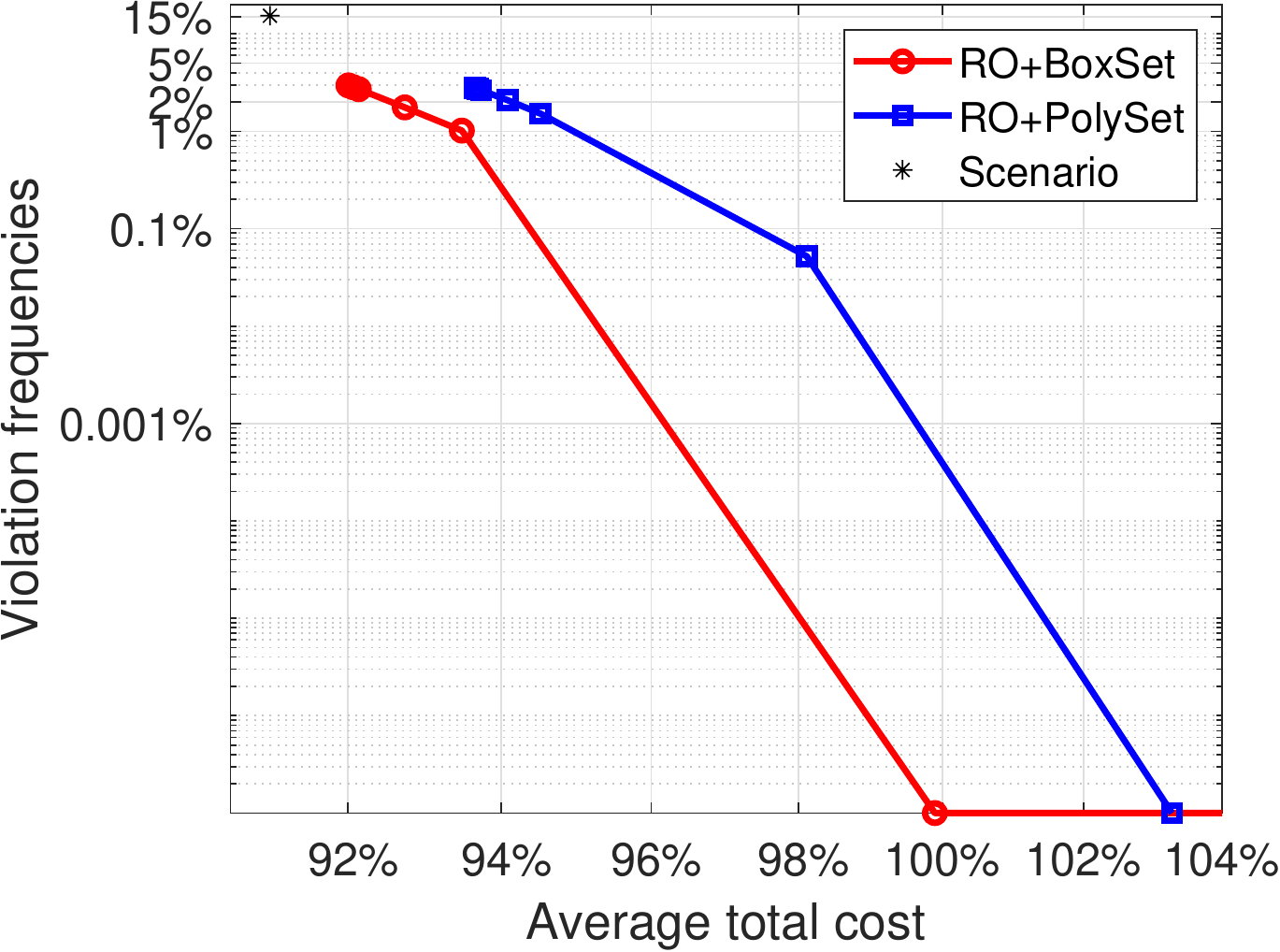} \caption{Pareto front for IEEE RTS 24-bus system obtained by plotting the cost and the violation frequency at different radii. Larger value of the radius results in higher cost and lower violation. The Scenario method does not have such a tuning parameter and hence is depicted as a point.}
\label{fig:Pareto}
\end{figure}
\begin{table}
\centering
\caption{Computation time and memory usage}
\begin{tabular}{lllll} 
\hline
Method   & Samples              & Variables & Constraints & Runtime  \\ 
\hline
Scenario & 50  & 5088      & 220227      & 355.06 s   \\
RO+BoxSet      & 50                      & 38280     & 62119       & 461.86 s   \\
RO+PolySet      & 50                      & 63304     & 87143       & 743.06 s  \\ 
\hline
Scenario & 200 & 5088      & 876627     & 5629.93 s \\
RO+BoxSet      & 200                     & 38280     & 62199       & 453.25  s \\
RO+PolySet      & 200                     & 63304     & 87143       & 775.81 s  \\
\hline
\end{tabular}
\label{tab:time}
\end{table}
\section{Conclusion}
We studied chance-constrained security-constrained dispatch and developed an algorithm to solve its DR counterpart where Wasserstein ambiguity sets were used. An attractive feature of our algorithm is the computational ease of approximating a solution, even for large networks. This was facilitated by adopting a two-step approach of solving the DR problem. We demonstrated the cost-robustness trade-offs of our results on stylized IEEE test case. In future, we wish to explore distributed algorithms to solve the DR problems formulated here. We also aim to reduce the conservativeness of our approach by adopting a one-step procedure that combines uncertainty quantification and robust optimization. 

\section*{Acknowledgments}
The last author would like to thank Dr. B.K. Poolla, Dr. A.R. Hota, and Dr. S. Bolognani for useful discussions regarding the optimization problem and the algorithmic framework.

\bibliographystyle{IEEEtran}
\bibliography{Reference}

\end{document}